\numberwithin{equation}{section}
\numberwithin{table}{section}
\numberwithin{figure}{section}
\newcommand{\orcid}[1]{\href{https://orcid.org/#1}{\includegraphics{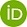}}} 
\newtheorem{theorem}{Theorem}[section]
\newtheorem{Claim}[theorem]{Claim}
\theoremstyle{definition}
\theoremstyle{definition}
\theoremstyle{definition}
\newenvironment{claimproof}{%
  \begin{proof}[Proof of Claim~\theClaim]%
}{%
	\end{proof}%
}
\definecolor{MyDarkOrange}{RGB}{230,97,1}
\definecolor{MyLightOrange}{RGB}{253,184,99}
\definecolor{MyDarkPurple}{RGB}{178,171,210}
\definecolor{MyLightPurple}{RGB}{94,60,153}
\newcommand{\tw}{\operatorname{\mathsf{tw}}}
\newcommand{\problemdef}[3]{%
    \begin{center}
	\fbox { \parbox[c] {0.9\textwidth} {
    \textsc{{#1}}\\    
         \textbf{Input:} #2 \\
         \textbf{Goal:} #3}}
    \end{center}
}
\def\NP{\mathsf{NP}}
\def\P{\mathsf{P}}
\def\W2{\mathsf{W[2]}}
\newcommand{\tu}[1]{\widetilde{u}_{#1}}
\newcommand{\td}[1]{\widetilde{d}_{#1}}
\DeclareMathOperator*{\argmin}{arg\,min}
\def\final{0}  
\def\iflong{\iffalse}
\newcommand{\maxnote}[1]{{\color{red}[{\tiny \textbf{Max:} \bf #1}]\marginpar{\color{red}*}}}
\newcommand{\lmnote}[1]{{\color{purple}[{\tiny \textbf{Mirabel:} \bf #1}]\marginpar{\color{purple}*}}}
\newcommand{\pnote}[1]{{\color{blue}[{\tiny \textbf{Panni:} \bf #1}]\marginpar{\color{blue}*}}}
\newcommand{\dnote}[1]{{\color{olive}[{\tiny \textbf{Dani:} \bf #1}]\marginpar{\color{olive}*}}}
\newcommand{\linote}[1]{{\color{orange}[{\tiny \textbf{Lilla:} \bf #1}]\marginpar{\color{orange}*}}}
\newcommand{\maxnote}[1]{}
\newcommand{\lmnote}[1]{}
\newcommand{\pnote}[1]{}
\newcommand{\dnote}[1]{}
\newcommand{\linote}[1]{}
\title{Diameter reduction via arc reversal}
\author[1]{\textbf{Panna Geh{\'e}r \orcid{0009-0000-9718-8318}}\footnote{This research was supported by the Lend{\"u}let Programme of the Hungarian Academy of Sciences -- grant number LP2021-1/2021; by the Hungarian National Research, Development and Innovation Office -- NKFIH, grant number FK128673; by the Ministry of Innovation and Technology of Hungary from the National Research, Development and Innovation Fund, financed under the ELTE TKP 2021-NKTA-62 funding scheme; by CONAHCyT: CBF 2023-2024-552.
E-mail: lyd21@student.elte.hu}} 
\author[2]{\textbf{Max K{\"o}lbl \orcid{0000-0002-5715-4508}}}
\author[1,3]{\textbf{Lydia Mirabel Mendoza-Cadena \orcid{0000-0002-4805-0029}}}
\author[1]{\textbf{Daniel P. Szabo \orcid{0009-0009-7263-1614}}}
\affil[1]{Department of Operations Research, E\"otv\"os Lor\'and University, Budapest, Hungary}
\affil[2]{Department of Pure and Applied Mathematics, Osaka University, Osaka, Japan}
\affil[3]{MTA-ELTE Matroid Optimization Research Group, E\"otv\"os Lor\'and University, Budapest, Hungary.}
\newcommand\shorttitle{Diameter reduction via arc reversal}
\def\volume#1{\def\@volume{#1}}
\def\shortauthor#1{\def\@shortauthor{#1}}
\def\shorttitle#1{\def\@shorttitle{#1}}
\let\thetitle\@title
\let\thedate\@date
\let\thevol\@volume
\let\theauthor\@author
\let\theshortauthor\@shortauthor
\let\theshorttitle\@shorttitle 
\renewcommand{\maketitle}{\bgroup\setlength{\parindent}{0pt}

\parindent=1em
\renewcommand{\thefootnote}

%





\vspace{1truecm}
\begin{center}{\vbox{\titlefont\@title}}\end{center}
\vspace{0.5truecm}
\begin{center}{\@author} \end{center}

\egroup
}
\renewcommand{\thefootnote}{\fnsymbol{footnote}}
\renewcommand{\@fnsymbol}[1]{%
    \ifcase#1 \or {\,\Letter\!} \or\textasteriskcentered\or \textasteriskcentered\textasteriskcentered 
    \else\@ctrerr\fi}
\newcommand*{\titlefont}{\fontsize{18}{21.6}\selectfont\textbf}
\renewcommand\@author{\ifx\AB@affillist\AB@empty\AB@author\else
      \ifnum\value{affil}>\value{Maxaffil}\def\rlap##1{##1}%
    \AB@authlist\\[\affilsep]\vbox{\AB@affillist}
    \else  \AB@authors\fi\fi}
\begin{document}

\maketitle
\thispagestyle{plain}
\renewcommand{\thefootnote}{\arabic{footnote}}   
\setcounter{footnote}{0}     
\setcounter{page}{1} 
\label{FirstPage}	

\pagestyle{myheadings} \markboth{  $\hspace{1.5cm}$  \hfil  P. Geh{\'e}r, M. K{\"o}lbl, L.M. Mendoza-Cadena and D.P. Szabo
\hfil $\hspace{3cm}$ } {\hfil$\hspace{1.5cm}$
{Diameter reduction via arc reversal}
\hfil}

\begin{center}
\noindent
\begin{minipage}{0.85\textwidth}\parindent=15.5pt


\smallskip

{\small{
\noindent {\bf Abstract.} 
The diameter of a directed graph is the maximum distance between any pair of vertices. We study a problem that generalizes \textsc{Oriented Diameter}:  For a given directed graph and a positive integer $d$, what is the minimum number of arc reversals required to obtain a graph with diameter at most $d$?
We investigate variants of this problem, considering the number of arc reversals and the target diameter as parameters. We show hardness results under certain parameter restrictions, and give polynomial time algorithms for planar and cactus graphs.
This work is partly motivated by the relation between oriented diameter and the volume of directed edge polytopes, which we show to be independent.
\smallskip

\noindent {\bf{Keywords:}}  diameter reduction, arc reversal, oriented diameter.
\smallskip

\noindent{\bf{2020 Mathematics Subject Classification:}} 05C85, 68R10
}
}
\end{minipage}
\end{center}

\section{Introduction\label{sec:intro}} 
Inspired by an application in road traffic control, to make traffic flow more efficient when using only one-way streets,  Robbins~\cite{robbins1939strongconnectivity} states that an undirected graph admits a strongly connected orientation, if and only if it is 2-edge-connected\footnote{A graph is $k$-edge-connected (resp. $k$-arc-connected) if at least $k$ edges (resp. arcs) have to be removed in order to destroy connectivity.}.
However, what makes an orientation ``good'' can vary according to the problem description. For example, we might want to minimize the distance from a given vertex to all other vertices or we might seek to minimize the average distance between all pairs of vertices. Here, we consider an \emph{oriented diameter problem} on directed graphs. In this problem our goal is to minimize the maximum distance between the vertices (called the \emph{diameter} of the graph) by changing the direction of as few arcs as possible. One motivation for this problem is the following: Suppose that there are $n$ airports, of which some ordered pairs are connected by a flight. For a given integer $d$ how many flights do we have to establish to make sure that from each airport one can get to another by changing planes at most $d$ times? 
Other instances where knowing the diameter of a graph is helpful include the analysis of social networks as well as parallel computing, see e.g. \cite{WattsStrogatzapplication1, Chungapplication2}.

To formalize the problem, we introduce some notation. Given a graph (directed or undirected) $D=(V, \, A)$, we denote an edge between $u$, $v \in V$ by $uv$ and an arc from $u$ to $v$ by $(u, \, v)$. We denote a path $P$ between $u, \, v \in V$ by $(u, \, w_1, \, w_2, \dots, w_\ell, \, v)$ where $w_0=u, \, w_1, \, \dots, \, w_\ell, \, w_{\ell+1}=v$ are the vertices  of the path so that $(w_i, \, w_{i+1}) \in A$ for all $i= 0, \dots,\ell$. The \emph{length} of $P$ is its number of edges (or arcs) and it is denoted by $|P|$.
For any $u, \, v \in V$, the \emph{distance} between them is the length (or weight if the edges are weighted) of the shortest path connecting $u$ and $v$; by convention, this distance is $\infty$ if there is no path connecting those vertices. The \emph{diameter} of the graph $D$, denoted by $\operatorname{diam}(D)$ is the maximum distance among its pairs of vertices.
An \emph{arc reversal} of $e=(u, \, v)$ in a directed graph $(V, \, A)$ results in a graph $(V, \, (A \setminus (u, \, v))\cup (v,u) )$. 

In this paper we consider the following problems.

\problemdef{Diameter Reduction In At Most $k$ Steps ($k$-Reversals)} 
    {A digraph $D=(V, \, A)$ with diameter $d_D$, and two integers $2 \leq d <d_D$, and $k\geq 0$.}
    {Decide whether a digraph of diameter at most $d$ can be obtained from $D$ by performing at most $k$ arc reversals.}

\problemdef{Weighted $k$-Reversals} 
    {A digraph $D=(V, \, A)$ with diameter $d_D$, nonnegative edge weights $w:A\to \mathbb{R}_{\geq 0}$, and two integers $2 \leq d <d_D$, and $k\geq 0$.}
    {Decide whether a digraph of diameter at most $d$ can be obtained from $D$ by performing arc reversals of weight at most $k$.}

\paragraph{Previous Results.} 
The \textsc{Oriented Diameter} problem takes an undirected simple graph and an integer \( d \geq 0 \), seeking an orientation with a diameter of at most \( d \). Chv{\'a}tal and Thomassen~\cite{chvatal1978distances} first addressed this problem, proving that finding an orientation with a diameter of 2 is \(\NP\)-hard, and extending their result to any diameter \( d \geq 4\). The case for \( d=3 \) remains unresolved. The hardness of the problem for planar graphs is also unknown, despite several attempts to explore it.
Bensmail, Duvignau, and Kirgizov~\cite{bensmail2016complexity} recently explored orientations in terms of the so-called {\it weak diameter}. A digraph $D$ has $d$-weak (resp. $d$-strong) diameter if the maximum weak (resp. strong) distance between any two vertices $u$ and $v$, i.e.\ the minimum (resp. maximum) of the distance from $u$ to $v$ and that from $v$ to $u$, is $d$. They proved that deciding if an undirected graph has a $d$-weak orientation is $\NP$-complete for $d \geq 2$ and conjectured that the same is true for $d$-strong orientations. This conjecture would complete the results of Chv{\'a}tal and Thomassen for $d = 3$.
Eggemann and Noble~\cite{eggemann2009minimizing, eggemann2012complexity} developed an FPT algorithm for \textsc{Oriented Diameter} in planar graphs, parameterized by treewidth. Mondal, Parthiban, and Rajasingh~\cite{mondal2022oriented} focused on triangular grid graphs, presenting a polynomial-time algorithm for this case, while also demonstrating that the weighted version of \textsc{Oriented Diameter} is weakly $\NP$-complete for planar graphs with bounded pathwidth. Additionally, Fomin, Matamala, and Rapaport~\cite{fomin2004complexity} examined chordal graphs, providing an approximation algorithm for \textsc{Oriented Diameter} and proving that determining if a chordal graph can achieve diameter $k$ is $\NP$-complete.
Ito et al.~\cite{ito2023monotone} studied the problem of finding a graph orientation that maximizes arc-connectivity\footnote{Arc-connectivity is the maximum integer $\lambda$ such that every non-empty subset $X \subsetneq V$ has at least $\lambda$ arcs leaving $X$.}. Hoppenot and Szigeti~\cite{hoppenot2024reversing} explored digraphs with $k$-arc-connectivity at most $\lfloor (k+1)/2\lfloor$, showing that if reversing a subset of arcs $F$ can achieve $k$-arc-connectivity, then reversing only one arc of $F$ does not reduce overall arc-connectivity. Additional problems related to arc-connectivity and reversals are discussed by Bang-Jensen, H{\"o}rsch, and Kriesell in~\cite{banJensen2023complexity}, while Bang-Jensen, Costa Ferreira da Silva and Havet \cite{BangJensen2022inversion} consider the minimum number of reversals needed to make a digraph acyclic.

\noindent\textbf{Our results.}\quad We summarize our results supplemented with the remaining (already solved or still open) cases in \cref{table:complexity}. Note that if \textsc{$k$-Reversals} is solvable on planar graphs, this implies that \textsc{Oriented Diameter} on planar graphs is also solvable (see \cref{sec:hardness}). On the other hand, if \textsc{Oriented Diameter} is $\NP$-hard for planar graphs, then \textsc{$k$-Reversals} is also $\NP$-hard on planar graphs. 

\begin{table}[h] \small
    \centering
    \caption{\small Complexity of the problems. Planar graphs is abbreviated as PG, while cactus graphs as CG. $\NP$-hard, Weakly $\NP$-hard, and $\W2$-hard are abbreviated by $\NP$h, W$\NP$h and $\W2$-h, respectively. The deferred result is marked with $\star$.}
    \label{table:complexity}
    \renewcommand*{\arraystretch}{0.85}
    \begin{tabular}{|c|c|c|c|} 
    \hline
    \textbf{Problem} & \textbf{Parameter} & \textbf{Fixed} & \textbf{Complexity} \\ \hline
         \textsc{$k$-Reversals}&  &  & $\NP$h -- Thm.~\ref{thm:hardness}\\
         \hline
         \textsc{$k$-Reversals}&  &  d& $\NP$h -- Thm.~\ref{thm:hardness}\\
         \hline
         \textsc{$k$-Reversals} &  &  k& $\P$ -- Thm.~\ref{thm:bruteforce} \\
         \hline
         \textsc{$k$-Reversals}&  k&  d& $\W2$h -- Thm.~\ref{thm:W2-hard_with_k}\\
         \hline
         \textsc{$k$-Reversals} (PG)&  &  & Open\\
         \hline
         \textsc{$k$-Reversals} (PG)&  d&  & FPT -- Sec.~\ref{sec:algo_eggeman_noble}\\
         \hline
         \textsc{$k$-Reversals} (PG)&  k&  & Open\\
         \hline
         \textsc{$k$-Reversals} (CG)&  &  & $\P$ -- Thm.~\ref{thm:cactus}\\
         \hline
         \textsc{Weighted $k$-Reversals} (CG)&  &  & W$\NP$h --   Thm.~\ref{thm:weaklyNP_wcactus}\\
         \hline
         \textsc{Oriented Diameter} (PG)&  &  & Open\\
         \hline
         \textsc{Oriented Diameter} (PG)&  d&  & FPT --\cite{eggemann2009minimizing}\\
         \hline
    \end{tabular}
\end{table}

We organize the remainder of the paper as follows. In \cref{sec:hardness}, we explore some hardness results for  \textsc{$k$-Reversals} with some parameters, while in \cref{sec:algorithm} we present polynomial algorithms for special cases. Additionally, the results in \cref{sec:polytopes} offer counter-examples to a conjecture regarding directed edge polytopes. We defer the preliminaries section and some of our results and proofs {to the appendix};  statements with deferred proofs are marked with the symbol~$\star$.

\section{Hardness results} \label{sec:hardness}
We show that  \textsc{$k$-Reversals} is $\NP$-hard. First, recall that \textsc{Oriented Diameter} has as input a \emph{simple} undirected graph $G=(V, \, E)$ and $d \in \mathbb{Z}_{\geq 0}$, and the goal is  to find an orientation such that the diameter is at most $d$. Chv{\'a}tal and Thomassen \cite{chvatal1978distances} showed that it is  $\NP$-hard to decide whether an undirected graph admits an orientation of diameter 2. The authors also showed that for every $d \geq 4$, it is $\NP$-hard to determine if an undirected graph can be oriented with diameter $d$.

\begin{theorem}\label{thm:hardness}
    The problem \textsc{$k$-Reversals} is $\NP$-hard.
\end{theorem}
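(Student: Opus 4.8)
The plan is to reduce from \textsc{Oriented Diameter}, which Chv\'atal and Thomassen~\cite{chvatal1978distances} proved $\NP$-hard for target diameter $2$ and for every fixed $d \geq 4$. Given an instance $(G,d)$ of \textsc{Oriented Diameter} with $G=(V,E)$ a connected simple undirected graph, I would build an instance of \textsc{$k$-Reversals} as follows. Fix an arbitrary linear order $v_1, \dots, v_n$ on $V$ and let $D = (V, A)$ be the acyclic orientation obtained by orienting every edge $v_iv_j \in E$ as the arc $(v_i, v_j)$ whenever $i < j$. Keep the same target $d$, and set the reversal budget to $k = |E|$. This construction is clearly computable in polynomial time.

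The correctness hinges on the observation that arc reversals on $D$ correspond exactly to orientations of $G$. Since $G$ is simple and each edge of $G$ becomes a single arc of $D$ with no anti-parallel partner, reversing any subset $F \subseteq A$ of arcs yields precisely the orientation of $G$ that disagrees with $D$ on the edges of $F$; conversely, every orientation $\vec{G}$ of $G$ is obtained from $D$ by reversing exactly the set of arcs on which $D$ and $\vec{G}$ disagree, a set of size at most $|E|$. Hence the orientations reachable from $D$ using at most $k=|E|$ reversals are precisely all $2^{|E|}$ orientations of $G$. Consequently, a digraph of diameter at most $d$ can be obtained from $D$ with at most $k$ reversals if and only if $G$ admits an orientation of diameter at most $d$, i.e.\ if and only if $(G,d)$ is a yes-instance of \textsc{Oriented Diameter}.

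It then remains only to check that the produced triple is a valid input, i.e.\ that $2 \le d < d_D$. The lower bound holds because the hard cases of \textsc{Oriented Diameter} all satisfy $d \ge 2$. For the upper bound, note that $D$ is acyclic, so it contains no directed path from $v_n$ to $v_1$; thus some ordered pair is at distance $\infty$ and $d_D = \infty > d$, using the paper's convention that the distance is $\infty$ when no directed path exists. The reduction itself is conceptually simple, so the step that warrants care is exactly this input-validity requirement together with the reversal/orientation bijection; once these are in place the equivalence is immediate. Finally, because the reduction preserves a fixed value of $d$, it simultaneously establishes $\NP$-hardness in general and $\NP$-hardness when $d$ is a fixed constant (taking $d=2$ or any fixed $d\ge 4$), which accounts for both corresponding rows of \cref{table:complexity}.
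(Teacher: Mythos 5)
Your proof is correct and follows essentially the same route as the paper's: a reduction from \textsc{Oriented Diameter} with reversal budget $k=|E|$, so that every orientation of $G$ is reachable by reversals. You additionally verify the input-validity condition $2 \le d < d_D$ by starting from an acyclic orientation, a detail the paper's one-line proof leaves implicit.
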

\begin{proof}
A solution to \textsc{$k$-Reversals} with parameters $k = |A|$ and $d$ also yields a solution to \textsc{Oriented Diameter} with parameter $d$.
By the above result of Chv{\'a}tal and Thomassen, \textsc{$k$-Reversals} is $\NP$-hard.
\end{proof}

For the rest of the section, we distinguish between weakly $\NP$-hard problems, which are hard only with binary input, and strongly $\NP$-hard problems, which remain hard with unary input (see e.g.~\cite{cygan2015parameterized}). For FPT algorithms, intractability often relies on reductions to complete problems in the $W[t]$ hierarchy.
\subsection{W[2]-hardness for number of arc reversals \label{sec:W2hard_for_number_flips}}
\begin{restatable}[$\star$]{theorem}{thmWtwoHardByk}\label{thm:W2-hard_with_k}
\textsc{$k$-Reversals} is $\W2$-hard when parameterized by $k$.
\end{restatable}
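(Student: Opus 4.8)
The plan is to prove $\W2$-hardness by a parameterized reduction from \textsc{Dominating Set}, which is the canonical $\W2$-complete problem when parameterized by the size of the dominating set. Given an instance $(G, t)$ of \textsc{Dominating Set} with $G = (V, E)$, I would construct a digraph $D$ together with a target diameter $d$ and a reversal budget $k$, such that $D$ can be turned into a digraph of diameter at most $d$ using at most $k$ arc reversals if and only if $G$ has a dominating set of size at most $t$. The crucial feature required for $\W2$-hardness is that the new parameter $k$ must be bounded by a function of the old parameter $t$ (ideally $k = t$, or $k = O(t)$), so that an FPT algorithm in $k$ would yield an FPT algorithm in $t$ for \textsc{Dominating Set}.

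The core idea I would pursue is to encode the act of choosing a vertex into the dominating set as the act of reversing a designated arc. For each vertex $v \in V$, I would attach a small gadget containing a ``selector'' arc whose reversal corresponds to placing $v$ in the dominating set; the budget $k$ is then set to $t$ so that at most $t$ vertices can be selected. The domination condition must be enforced through distances: I would arrange the construction so that every element to be dominated (each vertex $u \in V$, represented by some gadget vertex) is at distance at most $d$ from a fixed global structure \emph{only when} at least one of its closed-neighborhood selector arcs has been reversed. Concretely, orient the baseline graph so that shortcuts of the right length become available precisely along edges incident to selected vertices, and add a small number of auxiliary ``hub'' vertices connected so that the diameter bound forces every vertex to be reachable within $d$ steps through a selected neighbor. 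Reversals at non-selector arcs should be made useless (for instance by making them strictly increase some distance or by giving the baseline orientation already-optimal structure elsewhere), so that an optimal solution never wastes budget on them; this channels all $k$ reversals into selector arcs.

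The key steps, in order, are: (i) fix the reduction from \textsc{Dominating Set} and describe the gadget for each vertex, including the selector arc and the hub connections; (ii) set $d$ and $k = t$ explicitly; (iii) prove the forward direction — a dominating set $S$ of size $\le t$ yields a set of $\le t$ reversals (reverse the selector arcs for $v \in S$) producing diameter $\le d$, by verifying every pair of vertices is within distance $d$ using the shortcuts created at dominated vertices; (iv) prove the reverse direction — any set of at most $k = t$ reversals achieving diameter $\le d$ can be transformed, without loss of generality, into one using only selector arcs, and the set of selected vertices must dominate $G$ because any undominated vertex would remain at distance $> d$ from the hub. Finally, (v) confirm the construction is polynomial in $|V| + |E|$ and that $k$ depends only on $t$, certifying a valid parameterized reduction.

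The main obstacle I anticipate is the reverse direction, specifically the ``cleaning'' argument that a diameter-reducing reversal set of size $\le t$ can be assumed to consist solely of selector arcs. Arc reversals can interact in subtle non-local ways, potentially creating useful shortcuts through combinations of non-selector reversals that bypass the intended domination semantics; the gadget must be engineered rigidly enough — e.g.\ by using long subdivided paths or large distance penalties so that any non-selector reversal cannot contribute to meeting the diameter bound within the tight budget — to rule this out. Calibrating $d$ against the gadget path lengths so that the domination constraint is both \emph{necessary} and \emph{sufficient} for the diameter bound, while keeping all baseline pairwise distances under control, will be the delicate part of the argument.
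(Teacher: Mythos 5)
Your overall strategy is exactly the one the paper uses: a parameterized reduction from \textsc{Dominating Set} with $k$ equal to the dominating-set size $t$, one ``selector'' arc per vertex whose reversal encodes selection, a target diameter calibrated so that exactly the undominated vertices violate it, and a cleaning argument showing optimal solutions only touch selector arcs. You have also correctly identified where the difficulty lies. However, the proposal stops short of the actual content of the proof: the gadget is never constructed, and for this particular reduction the gadget \emph{is} the proof. Saying ``attach a small gadget containing a selector arc'' and ``add hub vertices so that the diameter bound forces reachability through a selected neighbor'' describes the desired behaviour, not a construction that achieves it, and it is not at all obvious that such a gadget exists with all the properties you need simultaneously: (a) every pair of vertices except the designated ``domination-checking'' pairs must already be within distance $d$ in the baseline orientation \emph{and remain so after any reversal of selector arcs}; (b) each bad pair must be fixable by reversing the selector arc of the corresponding vertex \emph{or of any of its neighbours} (this is what makes closed-neighbourhood domination, rather than exact cover, the right condition); and (c) no combination of at most $k$ non-selector reversals may fix all bad pairs.

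For comparison, the paper's gadget $H_i$ for vertex $v_i$ consists of a directed $4$-cycle-like core $u_{i,1}, u_{i,2}, d_{i,1}, d_{i,2}$ together with four auxiliary vertices joined by bidirected arcs into a near-clique spanning all gadgets; this auxiliary clique pins every distance at $\le 3$ \emph{except} the single pair $(d_{i,2}, d_{i,1})$ in each gadget, which sits at distance exactly $4$. The target diameter is set to $3$. An edge $v_iv_j \in E(G)$ contributes arcs $(u_{i,1},d_{j,1})$ and $(d_{j,2},u_{i,2})$ so that reversing the selector arc $(u_{i,1},u_{i,2})$ creates a length-$3$ path from $d_{j,2}$ to $d_{j,1}$ for every neighbour $v_j$ as well as for $v_i$ itself, and the cleaning claim verifies that every other single reversal repairs at most its own gadget or costs at least two arcs per repair. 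Your anticipated obstacle (iv) is resolved in the paper precisely by enumerating the undirected length-$3$ paths between $d_{i,2}$ and $d_{i,1}$ and observing that only the selector reversal orients one of them correctly at unit cost. Until you exhibit a gadget with these quantitative properties and verify the distance claims, the argument is a plan rather than a proof.
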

\begin{proof}[Proof sketch.] 
    We reduce from \textsc{Dominating Set}, a $\W2$-complete problem (e.g.~\cite{cygan2015parameterized}). In this problem, we are given an undirected graph $G = (V,E)$ and an integer $\ell \geq 0$, the goal is to determine if there is a dominating set of size at most $\ell$. The reduction from an instance $(G,\ell)$ of \textsc{Dominating Set} is a digraph $H$ with diameter four. The generated instance $(H,3,\ell)$ has an arc reversal if and only if   $(G,\ell)$ has a dominating set.
\end{proof}

\subsection{Weak NP-hardness for weighted cactus graphs}
We consider \textsc{$k$-Reversals} for cactus graphs which is weakly $\NP$-hard. Later, in Section~\ref{section: cactus} we show a dynamic programming algorithm.  Recall that cactus graphs are connected graphs where any two simple cycles share at most one vertex. We denote $[n]=\{1, 2, \dots, n \}$ for an integer $n\geq 1$.
\begin{theorem} \label{thm:weaklyNP_wcactus}
The problem \textsc{Weighted $k$-Reversals} is weakly $\NP$-hard for cactus graphs.
\end{theorem}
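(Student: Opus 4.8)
I would reduce from the weakly $\NP$-hard problem \textsc{Partition}: given positive integers $a_1,\dots,a_n$ with $\sum_{i=1}^n a_i = 2S$, decide whether some $J\subseteq[n]$ satisfies $\sum_{i\in J}a_i = S$. The reduction builds a ``chain of triangles'', which is a cactus. Introduce cut vertices $r_0,\dots,r_n$ (write $s=r_0$, $t=r_n$) and, for each $i\in[n]$, a triangle $T_i$ on $\{r_{i-1},m_i,r_i\}$ oriented as the directed cycle $r_{i-1}\to m_i\to r_i\to r_{i-1}$, with $w(r_{i-1}\to m_i)=a_i$ and the other two arcs of weight $0$. Consecutive triangles meet only in the shared vertex $r_i$ and non-consecutive ones are disjoint, so any two cycles share at most one vertex and the graph is a cactus. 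After pre-scaling the $a_i$ (e.g.\ multiplying by $2$) so that $S\ge 2$, I set the target diameter $d=S$ and the budget $k=S$; the original diameter is $d_D=2S$, so $2\le d<d_D$ as required.

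The structural heart of the argument is to describe which reversal sets even yield a finite diameter. Since the $r_i$ are cut vertices, passage between $r_{i-1}$ and $r_i$ in both directions is possible only inside $T_i$, so the whole digraph is strongly connected exactly when each triangle is oriented as one of its two directed $3$-cycles. Reversing a proper, nonempty subset of a triangle's arcs creates a local source or sink and destroys strong connectivity (diameter $\infty$); hence for any feasible solution each triangle is either left untouched or fully reversed. Fully reversing $T_i$ costs exactly its total arc weight $a_i$, and it swaps the cost of traversing $T_i$ in the two directions: in the original orientation the ``forward'' cost from $r_{i-1}$ to $r_i$ is $a_i$ and the ``backward'' cost is $0$, while after the flip these become $0$ and $a_i$. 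Thus feasible solutions are parametrized by the set $J\subseteq[n]$ of flipped triangles, with reversal cost exactly $\Sigma_J:=\sum_{i\in J}a_i$.

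With this bookkeeping, the shortest $s$--$t$ path must cross every cut vertex in order, so its length is $\sum_{i\notin J}a_i = 2S-\Sigma_J$, and symmetrically the $t$--$s$ distance is $\Sigma_J$. Assuming for a moment that these two terminal distances dominate the diameter, we get $\operatorname{diam}=\max(2S-\Sigma_J,\,\Sigma_J)\le S$ if and only if $\Sigma_J\ge S$ and $\Sigma_J\le S$, i.e.\ $\Sigma_J=S$; in that case the reversal cost is $\Sigma_J=S\le k$, and conversely any $J$ with $\Sigma_J=S$ gives diameter $S$ within budget. Hence the constructed instance is a yes-instance precisely when \textsc{Partition} is. Since the reduction is polynomial and encodes the (binary) numbers $a_i$ directly as arc weights, this proves weak $\NP$-hardness; the qualifier ``weak'' is appropriate because the unweighted problem on cactus graphs is in $\P$ (\cref{thm:cactus}).

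The main obstacle is the assumption flagged above: I must verify that no pair of vertices is farther apart than $\max(2S-\Sigma_J,\Sigma_J)$. The only delicate pairs are those involving a middle vertex $m_j$, which can be reached (or left) at an extra cost $a_j$; the key observation is that triangle $T_j$'s own forward or backward contribution already accounts for this $a_j$ on the relevant side, so every such distance is bounded by one of the two terminal distances. Concretely, for $j\notin J$ one checks $\operatorname{dist}(r_0,m_j)=\big(\sum_{\ell<j,\ \ell\notin J}a_\ell\big)+a_j\le\sum_{\ell\notin J}a_\ell=\operatorname{dist}(s,t)$, and the flipped case $j\in J$ is symmetric for the backward direction. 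Carrying out these finitely many case checks, together with the strong-connectivity argument that restricts attention to full-triangle flips, completes the proof.
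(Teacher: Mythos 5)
Your high-level strategy --- reduce from \textsc{Partition} via a chain of cactus blocks in which flipping the $i$-th block costs $a_i$ and swaps its forward/backward traversal costs --- is the same as the paper's (the paper uses digons with parallel arcs of weights $1$ and $a_i+1$ between consecutive vertices rather than triangles). However, your concrete construction fails, and the failure sits exactly at the step you flagged as ``the main obstacle'': the distances involving the middle vertices $m_j$ do \emph{not} all stay below $\max(2S-\Sigma_J,\Sigma_J)$. You checked $\operatorname{dist}(r_0,m_j)$ for $j\notin J$ and its backward counterpart, but not $\operatorname{dist}(m_j,r_n)$ for $j\in J$. When $T_j$ is flipped, the unique arc leaving $m_j$ is $m_j\to r_{j-1}$ of weight $a_j$, so $\operatorname{dist}(m_j,r_n)=a_j+\sum_{\ell\ge j,\ \ell\notin J}a_\ell$, which can be as large as $2S$. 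Concrete counterexample: $n=2$, $a_1=a_2=2$ (so $S=2$, a yes-instance of \textsc{Partition}). With $J=\{1\}$ one gets $\operatorname{dist}(m_1,r_2)=a_1+a_2=4>2=d$; with $J=\{2\}$ one gets $\operatorname{dist}(m_2,m_1)=a_2+a_1=4$; $J=\emptyset$ gives terminal distance $4$, and $J=\{1,2\}$ exceeds the budget. Since your own cut-vertex argument correctly rules out partial flips, the constructed instance is a no-instance of \textsc{Weighted $k$-Reversals} while the \textsc{Partition} instance is a yes-instance, so the forward direction of the reduction is false. (The pairs $\operatorname{dist}(r_n,m_j)$ for $j\notin J$ have the same defect.)

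The problem is local and fixable: the asymmetry comes from loading the weight $a_i$ onto an arc incident to the degree-$2$ vertex $m_i$, which forces $m_i$ to pay $a_i$ to exit in the ``wrong'' direction after a flip. If you instead put weight $a_i$ on the chord $r_i\to r_{i-1}$ and weight $0$ on both $r_{i-1}\to m_i$ and $m_i\to r_i$, then in either cyclic orientation $m_i$ has a weight-$0$ in-arc from one endpoint and a weight-$0$ out-arc to the other, so every distance involving $m_i$ coincides with a distance between two cut vertices; the diameter is then exactly $\max(\Sigma_J,2S-\Sigma_J)$ and the rest of your argument goes through verbatim. Alternatively, adopt the paper's digon construction, which has no middle vertices at all.
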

\begin{proof}
    We make a reduction from \textsc{Partition} which is known to be weakly $\NP$-hard. Recall that in \textsc{Partition}, we are given
        {$n$ positive integers $A = \{a_1, a_2, \dots,a_n\}$ with sum $2b$}, and the goal is to
        {determine if there is a subset $S \subseteq [n]$ such that
    $\sum_{i \in S} a_i= \sum_{i \not\in S} a_i=b$.}

    Given an input $(A, b)$ for \textsc{Partition}, we construct a weighted cactus graph formed by two paths on the same vertex set-- one weighted by $A$ and the other with unit weights.
    Let $G=(V, \, E)$ be a graph with vertices $v_1, \, v_2,\ldots, v_{n+1}$. For all $i\in [n]$ we have two arcs from $v_i$ to $v_{i+1}$ denoted by $e_i$ and $f_i$.  
    Set the weights $w(e_i)=1$ and $w(f_i)=(a_i+1)$.
    Finally, we set $k = b+n$ and $d = (b+n)$.
    This is our input for \textsc{Weighted $k$-Reversals} on a  cactus graph, as shown in \cref{fig:weighted_cactus}.
    Note that each arc is oriented either clockwise (for all arcs $e_i$) or counterclockwise (for all arcs $f_i$). This graph has diameter $\operatorname{diam}(G) = \infty$.

    \begin{figure}[t]
        \centering
        \begin{tikzpicture}[myNode/.style={circle, draw,  minimum size=3em},
        myArrow/.style={draw,line width = 1pt, -{Stealth[length=3mm]}}]
            \foreach \i \labi in {1/{$v_1$}, 2/$v_2$, 4/{$v_n$}, 5/{$v_{n+1}$}}{%
                \node[myNode] (v\i) at (2*\i, 0 ) {\labi};    
            };
            \node (v3) at (2*3, 0) { }; 
            \node (v6) at ($(v3.east)!0.5!(v4.west)$) {\Large $\cdots$}; 
            \foreach \a \b \laba in {1/2/1,2/3/2,4/5/n}{%
                \draw[myArrow, bend left] (v\a) to node[above, ] {1} (v\b);
                \draw[myArrow, bend right] (v\a) to node[below] {$a_{\laba} + 1$} (v\b);}   
        \end{tikzpicture}
        \caption{Instance of  \textsc{Weighted $k$-Reversals} on cactus graphs generated by an instance of \textsc{Partition}.}
        \label{fig:weighted_cactus}
    \end{figure}
    
   If there is an orientation with diameter $(b+n)$, the distance of $v_1$ and $v_{n+1}$ are such that the arcs directed clockwise  sum to at most $(b+n)$, and the arcs directed counterclockwise sum to at most $(b+n)$.
   As the total weight of the arcs is $2(b + n)$, it follows that both the distance from $v_1$ to $v_n$ and the distance from $v_n$ to $v_1$ must each be exactly $b + n$. If $S\subseteq [n]$ is the set of indices $i$ such that arc $f_i$ is directed clockwise, we have
    \vspace{-1ex}
    \begin{align*}
       \sum_{i\in S} (a_i+1) + \sum_{i \in A \setminus S} 1 &= \sum_{i \in A \setminus S} (a_i+1) + \sum_{i\in S} 1\\
       \sum_{i\in S} a_i + n &= \sum_{i \in A \setminus S} a_i + n\\
       \sum_{i\in S} a_i &= \sum_{i \in A \setminus S} a_i = b.
    \end{align*}
    Similarly, if $S\subseteq [n]$ is a set of indices with $\sum_{i \in S}a_i=b$,
    then reverse the arcs $e_i$ if $i \in S$ and reverse $f_i$ if $i \in A \setminus S$. We made $2n$ arc-reversals of weight $\sum_{i\in S} 1 + \sum_{i\in A \setminus S} (a_i+1) = (b+n)$. For all $i\in [n]$, $v_{i}$ and $v_{i+1}$ have two arcs in both directions. Thus, the unique path from $v_1$ to $v_2$ uses arcs $e_i$ if $i \in A \setminus S$ and arcs $f_i$ if $i \in S$, which has weight $b + n$. The rest of the arcs form a path from $v_n$ to $v_1$ with weight $b+n$, implying $\operatorname{diam}(G) = b+n$ as required.    
\end{proof}

\section{Algorithms} \label{sec:algorithm}
In this section we give positive results for some special cases of the problem \textsc{$k$-Reversals}. Our first observation is that in case $k$ is fixed, examining all sets of size at most $k$ yields a polynomial time algorithm.

\begin{restatable}[$\star$]{theorem}{thmBruteForce} \label{thm:bruteforce}
    \textsc{$k$-Reversals} is polynomially solvable if $k$ is fixed.
\end{restatable}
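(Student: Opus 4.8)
The plan is to give a direct brute-force enumeration argument. Let $D=(V,A)$ be the input digraph and write $n=|V|$ and $m=|A|$. Observe first that reversing an arc twice returns it to its original orientation, and the order in which reversals are performed is irrelevant to the final graph; hence any outcome achievable by at most $k$ reversals is determined by a subset $F\subseteq A$ with $|F|\le k$, namely the set of arcs whose orientation is flipped. Thus it suffices to search over all such subsets.

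The algorithm would enumerate every subset $F\subseteq A$ with $|F|\le k$. The number of these is
\begin{equation*}
\sum_{i=0}^{k}\binom{m}{i}=O\!\left(m^{k}\right),
\end{equation*}
which is polynomial in the input size whenever $k$ is a fixed constant. For each candidate $F$, I would form the digraph $D_F$ obtained from $D$ by reversing exactly the arcs in $F$, compute its diameter via an all-pairs shortest-path routine (for instance, running a BFS or Dijkstra from each vertex, or Floyd--Warshall in $O(n^3)$ time), and test whether $\operatorname{diam}(D_F)\le d$. The instance is a \textsc{yes}-instance precisely when some enumerated $F$ yields $\operatorname{diam}(D_F)\le d$, since by the first observation every diameter-$d$ graph reachable by at most $k$ reversals corresponds to exactly such an $F$.

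Combining these steps, the total running time is $O\!\left(m^{k}\cdot T\right)$, where $T$ is the cost of one diameter computation (polynomial in $n$ and $m$), and this is polynomial for fixed $k$. There is no genuine obstacle in this argument; the only points requiring care are the reduction of an arbitrary reversal sequence to a single subset $F$ and the verification that subsets of size strictly less than $k$ must also be examined (an optimal solution may reverse fewer than $k$ arcs). Both are immediate, so the proof is essentially a correctness-and-runtime check of the enumeration described above.
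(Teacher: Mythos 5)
Your proposal is correct and matches the paper's proof essentially verbatim: both enumerate all $O(m^k)$ reversal sets of size at most $k$, compute the diameter of each resulting digraph by breadth-first search, and accept if any achieves diameter at most $d$. Your additional remark that any reversal sequence reduces to a single subset of flipped arcs is a fair (if routine) point the paper leaves implicit.
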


\subsection{Dynamic programming algorithm for cactus graphs }\label{section: cactus}

First, note that the hardness result for weighted cactus graphs does not apply here as it only showed weak $\NP$-hardness.
For unweighted cactus graphs we can construct a unary weighted cactus graph by contracting paths and weighting them by their lengths.
The key difference is that the orientations of a cactus graph are simpler to handle, as each cycle must be oriented either clockwise or counterclockwise. 

\begin{restatable}[$\star$]{theorem}{thmCactusDynamicAlg} \label{thm:cactus}
    \textsc{$k$-Reversals}  is solvable in polynomial time for unweighted cactus graphs.
\end{restatable}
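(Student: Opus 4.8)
The plan is to reduce the instance to a clean structural DP over the block-cut tree of the cactus. First I would preprocess, following the remark preceding the theorem: contract every maximal path of internal degree-$2$ vertices and record its length as a (unary) weight, producing an equivalent unary-weighted cactus with $O(n)$ cut vertices and blocks; since the weights are polynomially bounded, all distance quantities stay polynomial. If the underlying graph has a bridge, then no orientation is strongly connected, so $\operatorname{diam}$ is infinite for every reversal set and we reject (recall $d$ is finite). Hence assume the cactus is $2$-edge-connected, i.e.\ a tree of cycles (blocks) glued at cut vertices, where digons (pairs of parallel arcs) count as length-$2$ cycles.

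The key structural lemma is that an orientation of a $2$-edge-connected cactus has finite diameter if and only if every block is oriented as a \emph{directed cycle} (clockwise or counterclockwise). For the forward direction, suppose some cycle $C$ is not consistently oriented; then $C$ has a vertex $t$ that is a sink within $C$ (both incident $C$-arcs enter $t$). In the block-cut tree every block other than $C$ hanging off $t$ reconnects to the rest of the graph only through $t$, so any walk leaving $t$ must return to $t$ before reaching another vertex of $C$; since $t$ cannot leave $C$, the remaining vertices of $C$ are unreachable from $t$, contradicting strong connectivity. The converse is immediate. Consequently the feasible orientations are exactly the $2^{\#\text{blocks}}$ choices of a cyclic direction per block; as blocks pairwise share no edge, the number of reversals is the sum over blocks of the cost $\mathrm{cost}(C,o)$ of the chosen direction $o$ (the number of $C$-arcs not already aligned with $o$), so costs are independent across blocks.

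With orientations fixed, the directed distance between two vertices decomposes along the unique block path joining them as the sum of the forced within-block distances; for a pair $u,v$ whose block path first separates at a block $C$ (entering at cut vertices $c_i$, $c_j$) it equals $\operatorname{dist}(u\to c_i)+D_o(c_i,c_j)+\operatorname{dist}(c_j\to v)$, where $D_o$ is the forced directed distance inside $C$. This suggests rooting the block-cut tree and a DP with state $(C,o)$ and table $T_{C,o}(\alpha,\beta)$ equal to the minimum number of reversals in the subtree below $C$ so that $C$ is oriented $o$, every pair inside the subtree is at distance at most $d$, the out-eccentricity from the parent cut vertex is at most $\alpha$, and the in-eccentricity to it is at most $\beta$; here $\alpha,\beta\in\{0,\dots,d\}$, so each table has $O(d^2)$ monotone entries. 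Leaves and the length/feasibility check $m-1\le d$ are immediate; at the root we simply output $\min_{o,\alpha,\beta}T_{\text{root},o}(\alpha,\beta)$ and compare it with $k$.

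The crux is the combination step at a cycle $C=c_0\dots c_{m-1}$ (parent at $c_0$) with children subtrees attached at various $c_i$. For a fixed direction $o$ one must choose, from each child's table, a pair $(\mathrm{out}_\ell,\mathrm{in}_\ell)$ of minimum total cost subject to (i) the up-pass bounds $D_o(c_0,c_{i_\ell})+\mathrm{out}_\ell\le\alpha$ and $\mathrm{in}_\ell+D_o(c_{i_\ell},c_0)\le\beta$, and (ii) the cross constraints $\mathrm{in}_\ell+D_o(c_{i_\ell},c_{i_{\ell'}})+\mathrm{out}_{\ell'}\le d$ for all ordered pairs of distinct children, which is exactly the requirement that every $u\to v$ path separating at $C$ has length at most $d$. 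The difficulty is that constraint (ii) couples the in-allowance of one child with the out-allowances of all others; I expect this to be the main obstacle. I would resolve it by fixing the cyclic order: writing $p_\ell=D_o(c_0,c_{i_\ell})$ and substituting $X_\ell=\mathrm{in}_\ell-p_\ell$, $Y_\ell=\mathrm{out}_\ell+p_\ell$ turns (ii) into threshold constraints $X_\ell+Y_{\ell'}\le d$ (resp.\ $\le d-m$ for wrap-around pairs), which can be enforced by sweeping the children in cyclic-position order while maintaining running maxima. Merging the $O(d^2)$-sized tables over the children (at most $O(d)$ attachment points once those sharing a cut vertex are merged) then runs in polynomial time. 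Since there are $O(n)$ blocks and each combination is polynomial, the whole DP, and hence \textsc{$k$-Reversals} on unweighted cacti, is solvable in polynomial time.
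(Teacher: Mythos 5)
Your proposal is correct and follows essentially the same route as the paper: the same key structural lemma (any finite-diameter orientation must orient every cycle as a directed cycle, so the only choice is clockwise vs.\ counterclockwise per block, with independent reversal costs), followed by a dynamic program over the rooted tree of cycles whose states record in/out distance allowances at the connecting cut vertex together with the reversal budget. The only differences are cosmetic --- you minimize reversals subject to the diameter bound where the paper minimizes the diameter subject to the budget, and you spell out the cross-child constraint handling in somewhat more detail --- so this matches the paper's argument.
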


\subsection{FPT algorithm on diameter for planar graphs\label{sec:algo_eggeman_noble}}
\textsc{$k$-Reversals} problem can be solved in polynomial time on planar graphs using the results of Eggeman and Noble~\cite[Thm. 2.5]{eggemann2009minimizing}, by adding a extra factor of $k$, as shown in last section. {See the details in the appendix~\ref{section_deferred_FPT_k_fixed}.}

\subsection{Directed edge polytopes \label{sec:polytopes}}
A simple digraph $D=(V, \, A)$ is \emph{symmetric} if for every arc $(u, \, v) \in A$ its reverse $(v,u)$ is also contained in $A$. Given a symmetric digraph $D=(V, \, A)$ its \emph{symmetric edge polytope} is defined as the convex hull of the vectors $e_v-e_w\in\mathbb{R}^V$ where $(w, \, v)\in A$.
In recent years, symmetric edge polytopes have become a popular object of study.
It is natural to extend the definition to general directed graphs, leading to the notion of a \emph{directed edge polytope}, see e.g.\ \cite{Higashitani_edge_poly}. 
Studies of these objects are still limited, but observations suggested an inverse relationship between the diameter of a directed graph and the volume of the associated directed edge polytope.
Cactus graphs provide a family of counter-examples to this claim.

\begin{restatable}[$\star$]{theorem}{thmEdgePolytope}
\label{edge_polytope}
     The diameter of the graph does not depend on the volume of its associated directed edge polytope.
\end{restatable}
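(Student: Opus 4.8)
The plan is to refute the conjectured inverse relationship directly, by exhibiting explicit cactus graphs whose diameters and directed-edge-polytope volumes move independently of one another: concretely, I will produce two cacti with equal polytope volume but different diameters, and two cacti with equal diameter but different volumes. Recall that the directed edge polytope of $D=(V,A)$ is $\mathcal P(D)=\operatorname{conv}\{e_v-e_w:(w,v)\in A\}\subseteq\mathbb R^V$. Every generator lies in the hyperplane $H=\{x:\sum_i x_i=0\}$, so $\mathcal P(D)$ is at most $(|V|-1)$-dimensional, and its volume must be read as the normalized (lattice) volume inside the affine span, taken with respect to the root lattice $\mathbb Z^V\cap H\cong A_{|V|-1}$. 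To keep the comparison honest I will take the three decisive examples on the same number of vertices, so that the polytopes live in a common lattice and their volumes are genuinely comparable.

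First I would fix four vertices and analyze three cactus orientations. (i) The directed $4$-cycle $C_4$ gives the generators $e_{i+1}-e_i$ (indices mod $4$), which sum to $0$ and are affinely independent, so $\mathcal P(C_4)$ is a $3$-simplex; a single determinant computation in a basis of $A_3$ shows its normalized volume equals $4$, and $C_4$ is strongly connected with $\operatorname{diam}=3$. (ii) The path of three $2$-cycles $v_1\!\leftrightarrow\! v_2\!\leftrightarrow\! v_3\!\leftrightarrow\! v_4$, each oriented cyclically (as a digon), contributes the antipodal pairs $\pm(e_{i+1}-e_i)$; since the vectors $u_i=e_{i+1}-e_i$ form a $\mathbb Z$-basis of $A_3$, this polytope is the cross-polytope $\operatorname{conv}\{\pm u_1,\pm u_2,\pm u_3\}$, of normalized volume $2^3=8$, while the graph has $\operatorname{diam}=3$. (iii) The star of three $2$-cycles with centre $c$ and leaves $v_1,v_2,v_3$, again oriented as digons, contributes $\pm(e_{v_j}-e_c)$; these three vectors likewise form a $\mathbb Z$-basis of $A_3$, so this polytope is again a unimodular cross-polytope of normalized volume $8$, but now $\operatorname{diam}=2$.

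With these values the statement follows at once. Comparing (ii) and (iii) yields two cacti of equal volume $8$ but diameters $3$ and $2$, so the diameter is not a function of the volume; comparing (i) and (ii) yields two cacti of equal diameter $3$ but volumes $4$ and $8$, so the volume is not a function of the diameter. Hence neither quantity determines the other, and in particular no monotone, let alone inverse, law can hold. (The same phenomenon persists in larger instances: the directed cycle $C_n$ has normalized volume $n$ and diameter $n-1$, while a one-point union of two directed triangles has normalized volume $3^2=9$ and diameter $4$, so that $C_9$ and this bowtie again share a volume but not a diameter.)

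The main obstacle is the volume bookkeeping rather than the graph theory: one must work in the correct hyperplane and lattice, recognize the digon gadgets as cross-polytopes over a unimodular basis of $A_{|V|-1}$ so that the normalized volume is exactly $2^{\dim}$, and verify that the chosen generating vectors really are a lattice basis, i.e.\ that the relevant determinant is $\pm1$. Once the normalization is pinned down, each volume reduces either to one small determinant (for the simplex $C_n$) or to the multiplicative free-sum rule $\operatorname{vol}(P\oplus Q)=\operatorname{vol}(P)\operatorname{vol}(Q)$ for normalized volumes (for the one-point unions), and the diameters are immediate from strong connectivity.
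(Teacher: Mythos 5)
Your proposal is correct, but it takes a genuinely different route from the paper. The paper's proof is structural: it builds, for every $i\geq 8$, a chain of directed $3$-cycles (plus one $4$-cycle in the odd case) glued at vertices, and observes that (a) reversing a directed cycle is a unimodular transformation of its simplex and (b) one-point unions correspond to free sums, so normalized volumes multiply; hence reversing one interior cycle leaves the volume of the whole cactus unchanged while dropping the diameter from $i$ to $i-1$. Your proof instead computes three explicit $4$-vertex polytopes from scratch: the $3$-simplex of the directed $C_4$ (normalized volume $4$, diameter $3$) and the two cross-polytopes coming from the path and the star of digons (normalized volume $2^3=8$, diameters $3$ and $2$). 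Your computations are right -- the edge vectors of a spanning tree do form a $\mathbb{Z}$-basis of $A_3$, so both digon gadgets give unimodular cross-polytopes, and the determinant for $C_4$ is indeed $4$. What your approach buys is concreteness and a stronger conclusion: you refute dependence in \emph{both} directions (equal volume with different diameters, and equal diameter with different volumes), with all examples in a single ambient lattice so the volume comparison is unambiguous; the paper only exhibits the first direction via its main construction (its $K_{3,3}$ remark supplies the second). What the paper's approach buys is an infinite family -- a counterexample pair at every diameter $i\geq 8$ on a common underlying graph, obtained by reorienting a single cycle -- together with the reusable lemma that the volume of a directed cactus's edge polytope is independent of how its cycles are oriented. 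One caveat on your parenthetical: comparing $C_9$ with the bowtie of two triangles puts polytopes of different dimensions side by side, so ``equal normalized volume'' there is a weaker statement than your main $4$-vertex comparisons; it is harmless but should not carry the argument, and it does not need to, since your three primary examples already do.
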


\section*{Acknowledgements}
This work began at the 14th Eml{\'e}kt{\'a}bla Workshop. We thank Lilla T{\'o}thm{\'e}r{\'e}sz for introducing the problem and for her insightful discussions, as well as Florian H{\"o}rsch for his valuable contributions.

We thank the anonymous reviewers for their helpful feedback.
\small
\bibliography{Diameter_reduction_via_arc_reversal}
\bibliographystyle{plain}


\newpage
\begin{appendices}


\section{Preliminaries}\label{sec:prelim}
For a given positive integer $n$, we denote by $[n]$ the set $\{ 1, \, 2, \dots, n \}$.  The set of integer, and non-negative integers, non-negative reals, are denoted by $\mathbb{Z}$, $\mathbb{Z}_{\geq 0}$, $\mathbb{R}_{\geq 0}$, respectively.
\vspace{-2ex}
\paragraph{Graph Theory.} 
Given an undirected graph  $G= (V, \, E)$, a subset of vertices $S \subseteq V$ is called a \emph{dom\-i\-nat\-ing set} if for each vertex $v$ of $G$ either $v\in S$, or $v$ has a neighbor $u$ contained in $S$, that is, $uv \in E$ and $u \in S$.
\vspace{-2ex}
\paragraph{Special graph classes.}
When searching for polynomial time algorithms for $\NP$-hard problems, focusing on specific graph families can be effective. Planarity often simplifies these problems; a graph is planar if it can be drawn in the plane without crossings. Cactus graphs, a notable subclass of planar graphs, are connected graphs where any two simple cycles share at most one vertex. They can be represented as a tree, with nodes for cycles and edges for shared vertices.
\vspace{-2ex}
\paragraph{Complexity Theory.} In studying $\NP$-hard problems, the class $\NP$ alone isn't enough for intractability results. We distinguish between weakly $\NP$-hard problems, which are hard with binary input, and strongly $\NP$-hard problems, which remain hard with unary input. For FPT algorithms, intractability often relies on reductions to complete problems in the $W[t]$ hierarchy. For instance, \textsc{Clique} is $W[1]$-complete when parameterized by clique size, and \textsc{Dominating Set} is $W[2]$-complete by dominating set size; thus, they are typically assumed not to be FPT.
\vspace{-2ex}
\paragraph{Treewidth.}
The \emph{treewidth} of a graph measures its similarity to a tree. First defined by Halin and later rediscovered by Robertson and Seymour, it involves a \emph{tree decomposition} \((T, \mathcal{B})\), where \(T\) is a tree and \(\mathcal{B}\) is a collection of bags \(B_v \subseteq V(G)\) for each node \(v\) in \(T\). Each edge of \(G\) must have a bag containing both endpoints, and bags containing a vertex \(v\) must form a subtree of \(T\). The \emph{width} of \((T, \mathcal{B})\) is the largest bag size minus one, and the \emph{treewidth} \(\tw(G)\) is the minimum width across all decompositions. While computing treewidth is \(\NP\)-hard, it can be done in FPT time for graphs with bounded treewidth.

\begin{theorem} [Bodlaender \cite{bodlaender1996treewidth}] \label{thm_treewidth}
For a given integer $k$, there is a polynomial time algorithm that decides if a graph's treewidth is at most $k$ and finds a corresponding tree decomposition if it is.
\end{theorem}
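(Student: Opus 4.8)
The plan is to establish this as Bodlaender's \emph{linear-time} algorithm for fixed $k$ (which in particular gives the stated polynomial bound), organised into two interacting phases: an approximation phase that produces, in time linear in $|V(G)|$, a tree decomposition of width bounded by a function of $k$ (or correctly reports $\tw(G)>k$), and an exactification phase that compresses any such constant-width decomposition to an optimal one. For the second phase I would invoke the Bodlaender--Kloks dynamic program as a black box: given a tree decomposition of $G$ of width $\ell$, it decides whether $\tw(G)\le k$ and, if so, outputs a decomposition of width at most $k$, in time $f(k,\ell)\cdot|V(G)|$. With $\ell$ bounded by a constant depending only on $k$, this is linear, so the whole difficulty is concentrated in producing such an initial decomposition quickly.

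For the first phase I would set up a self-reduction that shrinks the instance by a constant fraction at each step. The central combinatorial lemma is that every graph $G$ with $\tw(G)\le k$ and sufficiently many vertices contains, and one can find in linear time, a large ``reducible'' structure: either $\Omega(|V(G)|)$ simplicial vertices of degree at most $k$, or a matching $M$ of $\Omega(|V(G)|)$ edges joining vertices of bounded degree. The quantitative input is that a graph of treewidth $\le k$ is $k$-degenerate and hence has at most $k\,|V(G)|$ edges; a counting argument then forces a constant fraction of low-degree vertices, among which a standard dichotomy yields either many simplicial vertices or a large low-degree matching. Contracting the matching (or deleting the simplicial vertices) produces a graph $G'$ with $|V(G')|\le(1-\varepsilon)|V(G)|$.

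The reduction is then driven recursively. Since treewidth is minor-monotone, $\tw(G')\le\tw(G)$, so a negative answer for $G'$ propagates upward, and if $\tw(G)\le k$ the recursive call returns a width-$\le k$ decomposition of $G'$. I would lift this to a decomposition of $G$ by re-expanding each contracted vertex into its two endpoints in every bag that contains it (respectively re-inserting each deleted simplicial vertex together with its neighbourhood into one new bag); this at most doubles bag sizes, giving width $\le 2k+1$, and one checks the resulting object is a valid tree decomposition of $G$. Feeding this constant-width decomposition into the phase-two dynamic program then decides $\tw(G)\le k$ exactly and, when the answer is positive, returns a decomposition of width at most $k$. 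As each level costs $O(|V(G)|)$ and the size drops geometrically, the recurrence $T(n)=T((1-\varepsilon)n)+O(n)$ solves to $O(n)$, yielding the claimed bound.

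I expect the main obstacle to be the combinatorial existence lemma together with its algorithmic realisation: proving that a treewidth-$\le k$ graph always exposes a \emph{constant fraction} of reducible vertices with the correct constants, and that this structure can be located and contracted while maintaining all invariants in genuinely linear time. The lifting step is routine once one verifies the connectivity condition on the bags containing each vertex, and the dynamic program is used as an off-the-shelf subroutine, so the entire weight of the argument rests on this structural dichotomy and on the careful amortised bookkeeping that keeps every recursion level linear.
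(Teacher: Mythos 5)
The paper offers no proof of Theorem~\ref{thm_treewidth}; it is quoted verbatim from Bodlaender's 1996 paper and used as a black box. So the comparison is against Bodlaender's actual argument, and you have reproduced its architecture faithfully: a linear-time self-reduction that either shrinks the graph by a constant fraction or answers directly, the lift of a width-$\le k$ decomposition of the contracted graph to a width-$\le 2k+1$ decomposition of $G$ by splitting contracted vertices, the Bodlaender--Kloks dynamic program as the exactification phase, minor-monotonicity to propagate negative answers, and the recurrence $T(n)=T((1-\varepsilon)n)+O(n)$. (Note also that the theorem as stated only claims \emph{polynomial} time, so strictly weaker results, e.g.\ the $O(n^{k+2})$ algorithm of Arnborg, Corneil and Proskurowski, would already suffice; you are proving more than is asked.)

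The genuine gap is in your central combinatorial lemma, and you correctly sensed that all the weight rests there: the dichotomy is \emph{false} with plain simplicial vertices, and no counting argument can rescue it. Fix $k=2$ and take two nonadjacent vertices $a,b$ together with $n$ further vertices, each adjacent to exactly $a$ and $b$. This graph has treewidth $2$; every low-degree vertex has both its neighbors of degree $n$, so there is no matching joining two bounded-degree vertices (indeed not a single ``friendly'' vertex); and no vertex is simplicial, since each degree-$2$ vertex has the nonadjacent pair $\{a,b\}$ as its neighborhood. Bodlaender's resolution is a new idea, not tighter constants: form the \emph{improved graph} by adding an edge between every pair of vertices with at least $k+1$ common neighbors (legitimate because any tree decomposition of $G$ of width $\le k$ must place such a pair in a common bag, hence is also a decomposition of the improved graph), and work with vertices that are simplicial \emph{in the improved graph} of degree $\le k$. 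He then proves that when few friendly vertices exist, a constant fraction of vertices are I-simplicial; the re-insertion step finds the required bag because $N(v)$ is a clique of the improved graph; and the recursion must be run on the improved reduced graph, whose treewidth is at most $k$ if and only if $\operatorname{tw}(G)\le k$. In the example above, $a$ and $b$ acquire an edge and all $n$ degree-$2$ vertices become I-simplicial at once. Without this improvement step your phase-one lemma fails outright, so the proposal as written does not go through; with it, the rest of your outline matches Bodlaender's proof.
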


\section{Proofs for Section~\ref{sec:hardness}}
\thmWtwoHardByk*
\begin{proof}
   We make a reduction from \textsc{Dominating Set} which is known to be a $\W2$-complete problem (see e.g.\ Cygan et al.~\cite{cygan2015parameterized}). Recall that in \textsc{Dominating Set} we are given an undirected graph $G = (V,E)$, and $\ell \in \mathbb{Z}_{\geq 0}$, and the goal is to decide if there exists a dominating set of size at most $\ell$.
   
   Let $(G,\ell)$ be an instance of \textsc{Dominating Set}, let $V(G) = \{ v_1, \dots, v_n \}$. Using $(G,\ell)$ we construct an instance of \textsc{$k$-Reversals} as follows. 
    For each vertex $v_i$, we add the vertex-gadget $H_i$ (see a figure \cref{fig:vertex_gadget}) that contains the vertices:  $u_{i,1}, u_{i,2}$, referred as the \textit{upper vertices}; $d_{i,1}, d_{i,2}$, referred as the \textit{down vertices}; $\tu{i,1}, \tu{i,2}$, referred as the \textit{auxiliary upper vertices}; and  $\td{i,1}, \td{i,2}$, referred as the \textit{auxiliary down vertices.}
    We connect those vertices with the following arcs: For the upper and down vertices, add arcs $(u_{i,1}, u_{i,2})$, $(d_{i,1}, d_{i,2})$, $(u_{i,1}, d_{i,1})$, and $(d_{i,2}, u_{i,2})$. Connect each vertex with its auxiliary pair as $(u_{i,j},\tu{i,j}), (\tu{i,j}, u_{i,j})$ and $(d_{i,j},\td{i,j}), (\td{i,j},d_{i,j})$ for $j =1,2$. Connect all auxiliary vertices $x,y$ with arcs $(x,y),(y,x)$ for all pairs, except for pair $\td{i,1}$ and $\td{i,2}$.
    
   Now, we connect gadgets $H_i$ and $H_j$ in a symmetric way, meaning that the resulting arcs are the same if we change the role of $j$ and $i$ in the following description (see \cref{fig:gadgets_connected}).
   For each $v_iv_j \in E(G)$, and $H_i$, $H_j$ the gadgets for vertices $i$ and $j$, respectively, add arcs $(u_{i,1}, \, d_{j,1})$ and $(d_{j,2}, \, u_{i,2})$, and $(u_{j,1}, \, d_{i,1})$ and $(d_{i,2}, \, u_{j,2})$; note that the first two arcs are symmetrical to the last two.
   For each $v_i, v_j \in V(G)$, and $H_i$, $H_j$ the gadgets for vertices $i$ and $j$, respectively, connect all auxiliary vertices between $H_i$ and $H_j$ with arcs $(x,y), \, (y,x)$ for all pairs with $x \in H_i$ and $y\in H_j$; note that pair $\td{i,1}$ and $\td{j,2}$ can be connected.

\begin{figure}
    \centering
    \begin{subfigure}{0.4\textwidth}
    \centering
    \resizebox{\textwidth}{!}{
        \begin{tikzpicture}[myNode/.style={circle, draw,  minimum size=2.5em},myArrow/.style={draw,line width = 1pt, -{Stealth[length=3mm]}},
        myArrowDouble/.style={draw,line width = 1pt, {Stealth[length=3mm]}-{Stealth[length=3mm]}}]
            \coordinate (m_1) at (0, 7);
             \foreach \x in {i}{
                \node[myNode] (u\x1) at (m_1) {$u_{\x,1}$};
                \node[myNode, right = 3em of u\x1] (u\x2) {$u_{\x,2}$};
                \node[myNode, below = 2em of u\x1] (d\x1) {$d_{\x,1}$};
                \node[myNode, right = 3em of d\x1] (d\x2) {$d_{\x,2}$};
                \node[myNode, above left = 4pt and 12pt of u\x1] (U\x1) {$\tu{\x,1}$};
                \node[myNode, above right = 4pt and 12pt of u\x2] (U\x2) {$\tu{\x,2}$};
                \node[myNode, below left = 4pt and 12pt of d\x1] (D\x1) {$\td{\x,1}$};
                \node[myNode, below right = 4pt and 12pt of d\x2] (D\x2) {$\td{\x,2}$};
            
                \path [myArrow](u\x1) edge node[left] {} (u\x2); 
                \path [myArrow](u\x1) edge node[left] {} (d\x1);
                \path [myArrow](d\x1) edge node[left] {} (d\x2);
                \path [myArrow](d\x2) edge node[left] {} (u\x2);
                \foreach \a \b in {u\x1/U\x1,U\x1/u\x1,u\x2/U\x2,U\x2/u\x2,d\x1/D\x1,D\x1/d\x1,d\x2/D\x2,D\x2/d\x2}{
                    \draw (\a) edge [myArrow, bend right] (\b);};
                \foreach \a \b in {U\x1/U\x2, D\x1/U\x1, U\x2/D\x2}{
                    \draw (\a) edge [myArrowDouble, bend left] (\b);};
                \coordinate (aux1) at ($(U\x2)+(0.5,0.5)$);
                \draw[myArrowDouble] (U\x1) to [out=60, in=130]  (aux1) to [out=130-180,in=20] (D\x2);
                \coordinate (aux2) at ($(U\x1)+(-0.5,0.5)$);         
                \draw[myArrowDouble] (U\x2) to [out=130, in=130-90]  (aux2) to [out=130-90-180,in=160] (D\x1);}
    \end{tikzpicture} }
    \caption{Vertex gadget.\label{fig:vertex_gadget}}
    \end{subfigure}\hfill    
    \begin{subfigure}[b]{0.5\textwidth}
    \resizebox{\textwidth}{!}{
        \begin{tikzpicture}[myNode/.style={circle, draw,  minimum size=2.5em},myArrow/.style={draw,line width = 1pt, -{Stealth[length=3mm]}}]
            \coordinate (m_1) at (0, 7);
             \coordinate (m_2) at (4, 7);
             \foreach \x[count=\cx ] in {i,j}{
                \node[myNode] (u\x1) at (m_\cx) {$u_{\x,1}$};
                \node[myNode, right = 2em of u\x1] (u\x2) {$u_{\x,2}$};
                \node[myNode, below = 2em of u\x1] (d\x1) {$d_{\x,1}$};
                \node[myNode, right = 2em of d\x1] (d\x2) {$d_{\x,2}$};
                \path [myArrow](u\x1) edge node[left] {} (u\x2); 
                \path [myArrow](u\x1) edge node[left] {} (d\x1);
                \path [myArrow](d\x1) edge node[left] {} (d\x2);
                \path [myArrow](d\x2) edge node[left] {} (u\x2);}
    
            \draw(ui1) edge[myArrow] (dj1);
            \draw (dj2.north west) edge[myArrow]  (ui2);
            \draw (uj1)  edge[myArrow] (di1.north east);
            \draw (di2.east) edge[myArrow] (uj2.south west);
    \end{tikzpicture}
    }
    \caption{The connectivity of two gadgets. Auxiliary nodes are omitted for simplicity. \label{fig:gadgets_connected}}
    \end{subfigure}

    \begin{subfigure}{\textwidth}
    \resizebox{\textwidth}{!}{
        \begin{tikzpicture}[myNode/.style={circle, draw,  minimum size=3em},
        myArrow/.style={draw,line width = 1pt, -{Stealth[length=3mm]}},
         myArrowDouble/.style={draw,line width = 1pt, {Stealth[length=3mm]}-{Stealth[length=3mm]}}]
             \coordinate (m_1) at (0, 6);
             \coordinate (m_2) at (8, 6);
             \coordinate (m_3) at (4, 0);
             \foreach \x in {1,2,3}{
                \node[myNode] (u\x1) at (m_\x) {$u_{\x,1}$};
                \node[myNode, right = 3em of u\x1] (u\x2) {$u_{\x,2}$};
                \node[myNode, below = 2em of u\x1] (d\x1) {$d_{\x,1}$};
                \node[myNode, right = 3em of d\x1] (d\x2) {$d_{\x,2}$};
                \node[myNode, above left = 2pt and 12pt of u\x1] (U\x1) {$\tu{\x,1}$};
                \node[myNode, above right = 2pt and 12pt of u\x2] (U\x2) {$\tu{\x,2}$};
                \node[myNode, below left = 2pt and 12pt of d\x1] (D\x1) {$\td{\x,1}$};
                \node[myNode, below right = 2pt and 12pt of d\x2] (D\x2) {$\td{\x,2}$};
            
                \ifthenelse{\x=1}
                    {\path [myArrow,line width = 2.5pt,MyLightOrange ](u\x1) edge node[left] {} (u\x2);}
                    {\path [myArrow](u\x1) edge node[left] {} (u\x2);} 
                \path [myArrow](u\x1) edge node[left] {} (d\x1);
                \path [myArrow](d\x1) edge node[left] {} (d\x2);
                \path [myArrow](d\x2) edge node[left] {} (u\x2);
                \foreach \a \b in {u\x1/U\x1,U\x1/u\x1,u\x2/U\x2,U\x2/u\x2,d\x1/D\x1,D\x1/d\x1,d\x2/D\x2,D\x2/d\x2}{
                    \draw (\a) edge [myArrow, bend right = 10] (\b);};
                \foreach \a \b in {U\x1/U\x2, D\x1/U\x1, U\x2/D\x2}{
                    \draw (\a) edge [myArrowDouble, bend left] (\b);};
                \coordinate (aux1) at ($(U\x2)+(0.5,0.5)$);
                \draw[myArrowDouble] (U\x1) to [out=60, in=130]  (aux1) to [out=130-180,in=20] (D\x2);
                \coordinate (aux2) at ($(U\x1)+(-0.5,0.5)$);         
                \draw[myArrowDouble] (U\x2) to [out=130, in=130-90]  (aux2) to [out=130-90-180,in=160] (D\x1);}
            \foreach \i \j \b \bb \bbb \bbbb in {1/2/0/0/0/0, 2/3/-10/8/35/35, 3/1/40/40/10/-10}{
                \draw(u\i1) edge[myArrow, bend left =\b ] (d\j1);
                \draw (d\j2) edge[myArrow, bend right =\bb]  (u\i2);                
                \draw (u\j1)  edge[myArrow, bend right = \bbb] (d\i1);
                \draw (d\i2) edge[myArrow, bend left = \bbbb] (u\j2);}    
        \end{tikzpicture}
        }
        \caption{Example of reduction used on \cref{thm:W2-hard_with_k}. Directed graph $H$ constructed using the complete graph $G=(V,E)$ with $V=\{v_1,v_2,v_3\}$, where arcs between auxiliary nodes in distinct vertex gadgets are omitted. A solution to instance $(G,1)$ is $S = \{ v_1\}$, while a solution to \textsc{$k$-Reversals} is $F = \{ (u_{1,1}, u_{1,2})\}$ which is shown with a thick orange line. \label{fig:example_reduction}}
    \end{subfigure}
    \caption{Construction of the instance of \textsc{$k$-Reversals} defined in \cref{thm:W2-hard_with_k}. \label{fig:all_Figures}}    
\end{figure}

We call the resulting graph $H$. An example of the construction can be seen in \cref{fig:example_reduction}. To conclude the reduction, set $k = \ell$.
The gadgets have the following properties, which can be easily verified by checking the shortest paths among all the vertices. 

\begin{Claim}
\label{claim:Diam_Hi_Is_Four}
    For all $i=1, \, \dots, n$ it holds that $\operatorname{diam}(H_i) = 4$. Furthermore, the diameter of $H$ is exactly $4$.
\end{Claim}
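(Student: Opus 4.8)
The plan is to prove both equalities by separating an upper bound, obtained from the rich bidirectional structure on the auxiliary vertices, from a matching lower bound exhibited by a single stubborn pair. The two facts I would isolate first are: (i) every main vertex $x$ (one of $u_{i,j},d_{i,j}$) is joined to its auxiliary partner $\tilde x$ by the bidirectional arc pair, so $\operatorname{dist}(x,\tilde x)=\operatorname{dist}(\tilde x,x)=1$; and (ii) any two auxiliary vertices are at distance at most $2$ from one another. Fact (ii) holds because the construction makes every ordered pair of auxiliaries an arc, with the sole exception of the within-gadget pairs $\{\td{i,1},\td{i,2}\}$, and for such a pair $\tu{i,1}$ is simultaneously an out- and in-neighbor of both, giving a detour of length $2$. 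Crucially, (ii) is valid both inside an isolated gadget $H_i$ and in the whole graph $H$, since the inter-gadget construction only adds auxiliary arcs: it never deletes one and never raises an auxiliary distance.

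Given (i) and (ii), the upper bound is immediate. For arbitrary vertices $s,t$, let $\tilde s,\tilde t$ denote their auxiliary partners, where $\tilde s=s$ when $s$ is itself auxiliary. Concatenating the step $s\to\tilde s$ of length at most $1$, an auxiliary path from $\tilde s$ to $\tilde t$ of length at most $2$, and the step $\tilde t\to t$ of length at most $1$, yields $\operatorname{dist}(s,t)\le 4$. Applying this inside $H_i$ gives $\operatorname{diam}(H_i)\le 4$, and applying it in $H$ gives $\operatorname{diam}(H)\le 4$.

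For the lower bound I would single out the ordered pair $(d_{i,2},d_{i,1})$ and argue $\operatorname{dist}(d_{i,2},d_{i,1})=4$. A realizing path of length $4$, lying entirely inside $H_i$, is $d_{i,2}\to\td{i,2}\to\tu{i,1}\to\td{i,1}\to d_{i,1}$. To exclude a shorter path I would enumerate the out-neighbors of $d_{i,2}$, namely $u_{i,2}$, $\td{i,2}$, and the inter-gadget targets $u_{j,2}$ for $v_j$ adjacent to $v_i$, together with the in-neighbors of $d_{i,1}$, namely $u_{i,1}$, $\td{i,1}$, and the inter-gadget sources $u_{j,1}$. A length-$3$ walk would have to reach one of these last vertices in two steps; but (a) $\td{i,1}$ is unreachable in two steps, because the missing arc $\td{i,2}\to\td{i,1}$ is precisely the excluded pair and no out-neighbor of $d_{i,2}$ is an in-neighbor of $\td{i,1}$, while (b) each vertex $u_{\cdot,1}$ has its own auxiliary $\tu{\cdot,1}$ as its unique in-neighbor, which is not an out-neighbor of $d_{i,2}$. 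Hence no walk of length at most $3$ exists and the distance is exactly $4$; the same computation, with the inter-gadget neighbors dropped, shows $\operatorname{diam}(H_i)\ge 4$.

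I expect the lower bound in the full graph $H$ to be the only delicate point: a priori the edge-gadget arcs and the complete inter-gadget auxiliary connections might open a shortcut of length $3$ for $(d_{i,2},d_{i,1})$. The verification above shows they do not, because the inter-gadget arcs incident to $d_{i,2}$ and $d_{i,1}$ connect only main vertices of type $u_{\cdot,2}$ and $u_{\cdot,1}$, whose relevant in- and out-neighborhoods still force three intermediate steps, and every added auxiliary arc is already absorbed by fact (ii). Everything else reduces to this finite neighborhood check, which is why the statement is routinely verifiable.
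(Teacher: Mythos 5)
Your proof is correct, and it rests on the same structural idea as the paper's (route everything through the near-complete bidirected auxiliary layer, and isolate the pair $(d_{i,2},d_{i,1})$ as the extremal one), but the execution differs in a way worth noting. For the upper bound you use a single uniform $1+2+1$ concatenation $s\to\tilde s\to\cdots\to\tilde t\to t$, which is cleaner than the paper's pair-by-pair path enumeration; the trade-off is that the paper's case analysis additionally establishes that \emph{every} pair other than $(d_{i,2},d_{i,1})$ is at distance at most $3$, a stronger fact that the paper reuses in Claim~\ref{claim:characteristic_solution_flippling}, whereas your argument only yields the bound $4$ needed for the present claim. For the lower bound your treatment is actually more rigorous than the paper's: where the paper asserts that "using (auxiliary) vertices of other gadgets does not decrease this value," you enumerate the out-neighborhood of $d_{i,2}$ (namely $u_{i,2}$, $\td{i,2}$, and the $u_{j,2}$) and the in-neighborhood of $d_{i,1}$ (namely $u_{i,1}$, $\td{i,1}$, and the $u_{j,1}$), observe they are disjoint, and then kill every length-$3$ candidate by noting that $\td{i,1}$ is not two steps from $d_{i,2}$ (the excluded pair $\td{i,2}\to\td{i,1}$ is exactly the missing arc) and that each $u_{\cdot,1}$ has $\tu{\cdot,1}$ as its unique in-neighbor, which is not an out-neighbor of $d_{i,2}$. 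This finite neighborhood check is exhaustive and correctly handles the inter-gadget arcs, so no gap remains.
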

\begin{claimproof}
    To show the first statement, first we prove that the shortest path between any pair of vertices has length at most $4$. For auxiliary vertices the statement holds trivially. Next, we show that any auxiliary vertex $a \in \{ \tu{i,1}, \, \tu{i,2}, \, \td{i,1}, \, \td{i,2}\}$ is at distance at most 3 from any upper and down vertex $y$. First consider pair  $a=\td{i,1}$, $y=d_{i,2}$ and pair $a=\td{i,2}$, $y=d_{i,1}$ with their paths
    \begin{align}
        &(\td{i,1},d_{i,1}, d_{i,2}), \label{eq:path_inside_gadget_aux1}\\
        &(\td{i,2}, \tu{i,1} , \td{i,1},d_{i,1}). \label{eq:path_inside_gadget_aux2}
    \end{align}
    For the rest, let $y \in \{ u_{i,1}, \, u_{i,2}, \, d_{i,1}, \, d_{i,2}\}$, and let $a_y$ be its auxiliary vertex. Then the path 
     \begin{equation}\label{eq:path_inside_gadget_allAux}
         (a,a_y,y)
     \end{equation}
     has length 2, as desired.
    
    Similarly, if we start from $x \in \{ u_{i,1}, \, u_{i,2}, \, d_{i,1}, \, d_{i,2} \}$ and we consider $y \in \{ u_{i,1},u_{i,2},d_{i,1},d_{i,2} \}$ (except for pair $x =d_{i,2}$ and $y=d_{i,1}$), we have the path    
    \begin{equation}
        (x,a_x,a_y,y) \label{eq:path_inside_gadget}
    \end{equation}
    
    which have length 3,  where $a_x$ and $a_y$ are the auxiliary vertices of $x$ and $y$ respectively. Finally, for $x =d_{i,2}$ and $y=d_{i,1}$, we have the path 
    \begin{equation}
        (d_{i,2},u_{i,2},\tu{i,2},\td{i,1},d_{i,1}) \label{eq:path_inside_gadget_length4}
    \end{equation}    
    with length 4. Note that this (not unique) path gives the diameter of $H_i$ yielding $\operatorname{diam}(H_i) = 4$.

    Now we prove that the diameter of $H$ is exactly $4$. 

    By the first part of the proof, the path from $d_{i,2}$ to $d_{i,1}$ has length 4 for any $i = 1, \dots, n$ (see Equation~\ref{eq:path_inside_gadget_length4}), and it is easy to see that using (auxiliary) vertices of other gadget do not decrease this value. 
    Similarly, for the rest of vertices $x,y \in V(H_i)$ for $i= 1, \dots, n$. 
    Hence, by the first part of the proof we only need to consider the distances between vertices of different gadgets.

    Any pair of vertices in $\{ \tu{i,1}, \, \tu{i,2}, \, \td{i,1}, \, \td{i,2}, \, \tu{j,1}, \, \tu{j,2}, \, \td{j,1}, \, \td{j,2} \}$ is at distance at most 2 by construction. 
     Now, let $H_i$ and $H_j$ be two different gadgets, and consider the path from $x_i \in \{ u_{i,1}, \, u_{i,2}, \, d_{i,1}, \, d_{i,2}\}$ to $y_j \in \{ u_{j,1}, \,  u_{j,2}, \, d_{j,1}, \, d_{j,2}\}$. Let $a_{i}$ and $a_j$ be the auxiliary vertices of $x_i$ and $y_j$, respectively. Hence, the path
    \begin{equation}
        (x_i, a_i, a_j, y_j) \label{eq:path_between_gadgets}
    \end{equation}
    has length 3, concluding the proof. 
\end{claimproof}

A solution of the \textsc{$k$-Reversals} instance can have an arc reversals from one type of arcs.
\begin{Claim}
\label{claim:characteristic_solution_flippling}
    Any solution to instance $(H, 3, \ell)$ consists of a set of arc reversal of the form $(u_{i,1}, \,  u_{i,2})$ for some indexes $i$.
\end{Claim}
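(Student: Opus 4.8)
The plan is to prove the constructive strengthening of the statement: any solution $F$ to $(H,3,\ell)$ can be turned, without increasing the number of reversals and without raising the diameter above $3$, into one that reverses only top arcs $(u_{i,1},u_{i,2})$. First I would dispose of the auxiliary arcs. By construction every arc incident to an auxiliary vertex is symmetric, i.e.\ both $(x,y)$ and $(y,x)$ lie in $H$ (this holds for the vertex--auxiliary arcs, the intra-gadget auxiliary arcs, and the inter-gadget auxiliary arcs). Reversing such an arc only deletes the copy $(x,y)$ while its reverse survives, so it can never decrease a distance; hence I may delete all such reversals, keeping the diameter at most $3$ and not increasing the count. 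After this step the surviving reversals lie among the single-direction arcs: the four square arcs (top, bottom, left, right) of each gadget and the inter-gadget arcs.

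The heart of the argument is a local analysis of the unique distance-$4$ pairs. By \cref{claim:Diam_Hi_Is_Four} the only pairs at distance $4$ are the pairs $d_{i,2}\to d_{i,1}$, so reaching diameter $3$ amounts to creating, for every $i$, a path of length at most $3$ from $d_{i,2}$ to $d_{i,1}$ while keeping the reverse direction short. The two facts I would use are that $d_{i,2}$ has out-neighbourhood $\{u_{i,2},\td{i,2}\}\cup\{u_{j,2}:v_iv_j\in E(G)\}$ and $d_{i,1}$ has in-neighbourhood $\{u_{i,1},\td{i,1}\}\cup\{u_{j,1}:v_iv_j\in E(G)\}$, and that the pair $\td{i,1},\td{i,2}$ is the \emph{only} non-adjacent auxiliary pair. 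Exploiting this asymmetry, any path from $d_{i,2}$ to $d_{i,1}$ that leaves through $\td{i,2}$ or enters through $\td{i,1}$ is forced onto a second auxiliary vertex, because $\td{i,1}$ and $\td{i,2}$ are non-adjacent, and therefore costs at least $4$. Consequently a length-$3$ path must start $d_{i,2}\to u_{?,2}$ and end $u_{?,1}\to d_{i,1}$, with its middle arc joining an ``upper-$2$'' vertex to an ``upper-$1$'' vertex. Inspecting the arc set, the only such arcs are the reversed top arcs $u_{i,2}\to u_{i,1}$ and $u_{j,2}\to u_{j,1}$, since no inter-gadget arc joins two $u$-vertices. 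This isolates exactly two admissible routes, $d_{i,2}\to u_{i,2}\to u_{i,1}\to d_{i,1}$ (top arc of $H_i$ reversed) and $d_{i,2}\to u_{j,2}\to u_{j,1}\to d_{i,1}$ (top arc of a neighbour $H_j$ reversed).

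From this I draw two conclusions. First, reversing the bottom arc of $H_i$ is self-defeating: it makes $d_{i,1}\to d_{i,2}$ the new hard direction, and the same auxiliary obstruction keeps this distance at least $4$ regardless of which other single-direction arcs are reversed, so no solution reverses a bottom arc. Second, the left, right and inter-gadget reversals only delete arcs from the admissible routes and never manufacture a new short $d_{i,2}\to d_{i,1}$ path, so they are useless. Thus the set $S=\{\,i:(u_{i,1},u_{i,2})\in F\,\}$ of reversed top arcs must, for every $i$, contain $i$ or a neighbour of $i$ (otherwise $d_{i,2}\to d_{i,1}$ would exceed $3$). Replacing $F$ by the top-only set on $S$ keeps every square and inter-gadget arc intact, so all the length-$3$ routes survive and $\operatorname{diam}$ stays at most $3$, while $|S|\le|F|\le\ell$; this proves the claim and, at the same time, exhibits $S$ as a dominating set.

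The step I expect to be the main obstacle is the exhaustive verification in the second and third paragraphs that no combination of non-top single-direction reversals produces a short $d_{i,2}\to d_{i,1}$ path, and in particular that a reversed bottom arc cannot be rescued by reversing the left, right, or inter-gadget arcs. The clean device for controlling all these cases is the auxiliary-detour bound: since $\td{i,1},\td{i,2}$ is the only severed auxiliary adjacency, every path between $d_{i,1}$ and $d_{i,2}$ that is forced onto auxiliary vertices pays an extra $+1$, which uniformly rules out length-$3$ detours and pins the reversed top arc as the sole mechanism for shortcutting a critical pair.
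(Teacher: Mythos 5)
Your overall route is the same as the paper's: isolate the unique distance-$4$ pairs $d_{i,2}\to d_{i,1}$, observe that a length-$3$ route must leave $d_{i,2}$ into an upper-$2$ vertex and enter $d_{i,1}$ from an upper-$1$ vertex, and note that the only arcs joining two $u$-vertices are the top arcs, so the only one-reversal fix is $(u_{i,1},u_{i,2})$ for $i$ or a neighbour $j$. Your preprocessing step discarding reversals of symmetric (auxiliary-incident) arcs, and your restatement of the claim as a normalization (``any solution can be converted to a top-only one of no greater size''), are both welcome sharpenings --- the literal wording of \cref{claim:characteristic_solution_flippling} is what the later reduction actually needs in this normalized form.

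However, there is a genuine gap in your second and third paragraphs, precisely at the step you flagged as the main obstacle. You assert that after reversing the bottom arc $(d_{i,1},d_{i,2})$, the new hard direction $d_{i,1}\to d_{i,2}$ stays at distance at least $4$ ``regardless of which other single-direction arcs are reversed,'' invoking the $\td{i,1}$--$\td{i,2}$ non-adjacency. This is false: additionally reversing $(u_{i,1},d_{i,1})$ and $(d_{i,2},u_{i,2})$ creates the path $d_{i,1}\to u_{i,1}\to u_{i,2}\to d_{i,2}$ of length $3$, which avoids auxiliary vertices entirely, so the auxiliary-detour bound does not apply to it. Consequently a valid solution \emph{can} repair a gadget $H_i$ with three non-top reversals, and then your replacement set $S=\{i:(u_{i,1},u_{i,2})\in F\}$ need not dominate $v_i$; the final sentence of your third paragraph does not follow. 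The conclusion is still salvageable, but it requires a counting/charging argument rather than an impossibility argument: any non-top repair of a gadget costs at least $3$ reversals and covers only that gadget, whereas a top reversal costs $1$ and covers the closed neighbourhood of $v_i$, so if $T$ is the set of gadgets repaired without top arcs and $S$ the set of reversed top arcs, then $3|T|+|S|\le|F|\le\ell$ and $S\cup T$ is a dominating set of size at most $\ell$. You should either add this accounting step or prove the stronger statement that a minimum-size solution never uses a non-top repair; without one of these, the normalization you claim in the first paragraph is not established.
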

\begin{claimproof}
    We check that reversing other arcs will require more than $\ell$ arc reversals.
    Notice that paths of length 4 are from $d_{i,2}$ to $d_{i,1}$ (see \cref{eq:path_inside_gadget_length4}). Thus we can focus only on reducing the distance from $d_{i,2}$ to $d_{i,1}$. The main idea is to consider an undirected shortest paths of length smaller than 4, and give the correct direction with as few reversals as possible.

    Note that changing the direction of arc $(d_{i,1}, \, d_{i,2})$ only affects gadget $H_i$, and using such type of arcs force us to have $n$ arc reversals. Also, it is easy to see that there is no undirected path of length two from $d_{i,2}$ to $d_{i,1}$.
    
    The undirected shortest paths between $d_{i,2}$ and $d_{i,1}$ with length 3 are:
    \begin{align}
        &(d_{i,2},u_{i,2}, u_{i,1}, d_{i,1}) \label{eq:undirected_path_between_downVertices1}\\
        &(d_{i,2},u_{j,2}, u_{j,1}, d_{i,1}) \label{eq:undirected_path_between_downVertices2} \text{ for some index $j$}.
    \end{align}
    It is enough to direct correctly at least one of those paths. Clearly, reversing the arc $(u_{i,1}, \, u_{i,2})$ is enough to modify such paths for the two connected gadgets $H_i$ and $H_j$, and any other modification requires at least two arcs.

    It only remains to prove that reversing the arc $(u_{i,1}, \, u_{i,2})$ affects gadgets $H_i$ and all the other gadgets connected to it. Indeed, this follows by paths on \cref{eq:undirected_path_between_downVertices1} and \cref{eq:undirected_path_between_downVertices2}.
\end{claimproof}

    The reduction is indeed correct, as it follows by last claims.
\begin{Claim}
\label{claim:correct_reduction}
   $(G,\ell)$ is a \texttt{Yes}-instance of \textsc{Dominating Set} if and only if $(H, \, 3, \, k =\ell)$ is a \texttt{Yes}-instance of \textsc{$k$-Reversals}.
\end{Claim}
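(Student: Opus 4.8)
The plan is to exhibit an explicit correspondence between dominating sets $S \subseteq V(G)$ and reversal sets of the form $F = \{(u_{j,1},u_{j,2}) : j \in S\}$, noting that $|F| = |S|$. The heart of the argument is an observation already implicit in the proof of \cref{claim:characteristic_solution_flippling}: reversing the single arc $(u_{j,1},u_{j,2})$ repairs the unique distance-$4$ obstruction of gadget $H_i$ precisely for the indices $i$ in the closed neighbourhood $N[j] = \{j\} \cup \{i : v_iv_j \in E(G)\}$. Indeed, the reversed arc $(u_{j,2},u_{j,1})$ turns the undirected path of \cref{eq:undirected_path_between_downVertices1} into a directed length-$3$ path for $i=j$, and turns the path of \cref{eq:undirected_path_between_downVertices2} into a directed length-$3$ path for every neighbour $i$ of $j$, using the connection arcs $(d_{i,2},u_{j,2})$ and $(u_{j,1},d_{i,1})$ that exist exactly when $v_iv_j \in E(G)$.

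First I would handle the forward direction. Suppose $S$ is a dominating set with $|S| \le \ell$ and reverse the arcs in $F = \{(u_{j,1},u_{j,2}) : j \in S\}$, which uses $|S| \le \ell = k$ reversals. By \cref{claim:Diam_Hi_Is_Four} the only vertex pairs at distance $4$ in $H$ are the pairs $(d_{i,2},d_{i,1})$, one per gadget. Since $S$ dominates $G$, every index $i$ satisfies $i \in N[j]$ for some $j \in S$, so by the observation above the distance from $d_{i,2}$ to $d_{i,1}$ drops to $3$. Hence the resulting diameter is at most $3$, and $(H,3,\ell)$ is a \texttt{Yes}-instance.

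Conversely, suppose $(H,3,\ell)$ is a \texttt{Yes}-instance. By \cref{claim:characteristic_solution_flippling} I may take a solution of the form $F = \{(u_{j,1},u_{j,2}) : j \in S\}$ with $|S| \le \ell$. Since the reversed digraph has diameter at most $3$, for every $i$ the pair $(d_{i,2},d_{i,1})$ — at distance $4$ before any reversal — now admits a directed path of length $3$. The only undirected length-$3$ paths between these two vertices are those of \cref{eq:undirected_path_between_downVertices1,eq:undirected_path_between_downVertices2}, and each becomes directed only after reversing $(u_{i,1},u_{i,2})$, respectively some $(u_{j,1},u_{j,2})$ with $v_iv_j \in E(G)$. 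Thus $S \cap N[i] \neq \emptyset$ for every $i$, so $S$ is a dominating set of size at most $\ell$, completing the equivalence.

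The step I expect to require the most care is verifying that the reversals in $F$ do not create any \emph{new} pair at distance greater than $3$ elsewhere in $H$. Reversing $(u_{j,1},u_{j,2})$ only alters paths routed through the arc joining the two upper vertices of $H_j$; every other shortest path certified in the proof of \cref{claim:Diam_Hi_Is_Four} — in particular the length-$\le 3$ routes of \cref{eq:path_inside_gadget_allAux}, \cref{eq:path_inside_gadget}, and \cref{eq:path_between_gadgets} passing through the densely connected auxiliary vertices — is untouched, so all distances other than the repaired $(d_{i,2},d_{i,1})$ pairs remain at most $3$. This case analysis is routine, but it is the one place where the abundance of auxiliary arcs, rather than the domination correspondence itself, does the work.
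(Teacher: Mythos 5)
Your proposal is correct and follows essentially the same route as the paper's own proof: the same bijective correspondence between dominating sets and reversal sets $\{(u_{j,1},u_{j,2}) : j \in S\}$, the same reliance on \cref{claim:characteristic_solution_flippling} for the converse, and the same check that the auxiliary-vertex paths keep all other distances at most $3$. No substantive differences to report.
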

\begin{claimproof}
    Let $S^* = (s_1, \dots, s_\ell)$ be a dominating set of $\ell$ elements. Consider the following $\ell$ arc reversals $F = \{ (u_{i,1}, \, u_{i,2}) \, \vert \, v_i \in S^* \}$. Let $H'$ be the graph obtained by reversing the arcs in $F$. We show that $\operatorname{diam} (H') = 3$. 

    First, note that auxiliary vertices are not modified by $F$ and the distance between any pair is still at most 3. 
    For all gadgets $H_i$, by paths of the form of $(x,a_x,a_y,y)$, we do not increase the distance from $x \in \{ u_{i,1}, \, u_{i,2}, \, d_{i,1}, \, d_{i,2} \}$ to  $y \in \{ u_{i,1}, \, u_{i,2}, \, d_{i,1}, \, d_{i,2} \}$ (except for pair $x =d_{i,2}$ and $y=d_{i,1}$)  where $a_x$ and $a_y$ are the auxiliary vertices of $x$ and $y$ respectively. 
    Similarly, by $(x_i, a_i, a_j, y_j)$ we do not increase any distance from $x_i \in \{ u_{i,1}, \, u_{i,2}, \, d_{i,1}, \, d_{i,2}\}$ to $y_j \in \{ u_{j,1}, \, u_{j,2}, \, d_{j,1}, \, d_{j,2}\}$. Also, all paths already mentioned have length 3.

    We only need to verify the distance from $d_{i,2}$ to $d_{i,1}$ for all $i \in \{ 1, \dots, n\}$. If $i$ is an index so that $v_i \in S^*$, then the path $(d_{i,2}, \, u_{i,1}, \, u_{i,2}, \, d_{i,1})$ has length 3 as desired. Otherwise, let $j$ be the index so that $v_i$ is adjacent to $v_j$ in $G$ and $v_j\in S^*$. Such index $j$ exists as $S^*$ is a dominating set. Thus, the path $(d_{i,2}, \, u_{j,2}, \, u_{j,1}, d_{i,1})$ has length 3, as desired. 

    To see the other direction, let $F^*$ be the set of $\ell$ arc reversals, and $H'$ the graph of diameter 3 obtained after the reversing. By Claim~\ref{claim:characteristic_solution_flippling}, we know that $F^*$ consists of arcs $(u_{i,1}, \, u_{i,2})$ for some gadgets $H_i$. Let us define $S = \{ v_i \,\vert \, (u_{i,1}, u_{i,2}) \in F^* \}$. Note that $|S| = \ell$. We show that $S$ is a dominating set. 
    Suppose there exists an index $i$ so that $v_i \notin S$ and $v_i$ has no adjacent vertex $v_j \in S$. This implies that distance from $d_{i,2}$ to $d_{i,1}$ is still 4, but this contradicts the optimality of $F^*$. 
\end{claimproof}           
\end{proof}

\section{Proofs for Section \ref{sec:algorithm}}

\thmBruteForce*
\begin{proof}
    Let $m$ be the number of arcs of the given graph.
    For all $i=1, \dots, k$ and $j=1, \dots, \binom{m}{i}$ consider all possible reversal sets $F_{ij}$. For each $F_{ij}$, using breadth first search we can calculate the diameter of the graph where the edge set $F_{ij}$ is reversed. Hence, we can decide in polynomial time whether or not the diameter can be reduced by reversing at most $k$ arcs. 
\end{proof}

\thmCactusDynamicAlg*

\begin{proof}
    \noindent\textbf{Proof:} Let $ G_0=(V,A)$ be the given orientation of an underlying undirected cactus graph $G=(V,E)$
 and $k_0, \, d_0$ the inputs to the problem. We begin by showing that the problem can be reduced to choosing for each cycle, whether to orient it clockwise or counterclockwise.
    \begin{Claim}
        In any orientation of $G$ with a finite diameter, each of the cycles are oriented as a directed cycle.
    \end{Claim}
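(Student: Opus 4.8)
The plan is to show that a finite diameter forces strong connectivity and that, in a cactus, strong connectivity alone already pins down the orientation of every cycle. First I would record the only structural property of cacti that is needed: since any two distinct cycles of $G$ share at most one vertex, no edge can lie on two cycles, because two cycles through a common edge would share both of its endpoints. Hence every edge $e=uv$ of a cycle $C$ lies on $C$ and on no other cycle. From this I would derive the key lemma that the only simple paths between $u$ and $v$ in $G$ are the edge $e$ itself and the complementary arc $C-e$: if $Q$ is any simple $u$-$v$ path avoiding $e$, then $Q+e$ is a simple cycle through $e$, which must coincide with $C$, so $Q=C-e$.

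Next I would use that an orientation of $G$ has finite diameter if and only if it is strongly connected, i.e.\ every ordered pair of vertices is joined by a directed path. Fix a cycle $C$ and an edge $e=uv$ on it. Strong connectivity provides a directed path from $u$ to $v$ and one from $v$ to $u$; passing from a directed walk to a simple directed path, each of these underlies a simple undirected $u$-$v$ path, so by the lemma each is carried either by $e$ or by the complementary arc $C-e$. Since the single arc obtained from $e$ realises only one of the two directions, the arc $C-e$ must realise the other one. Thus if $e$ is oriented $u \to v$, then $C-e$ is forced to be oriented $v \to u$, and together they form a consistently directed cycle; the situation is symmetric if $e$ is oriented $v \to u$. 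In either case $C$ is oriented as a directed cycle.

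Because $C$ was arbitrary, every cycle of $G$ is oriented as a directed cycle, which is the claim. I would also note the degenerate case: if $G$ has a bridge it is not $2$-edge-connected, so no orientation is strongly connected and none has finite diameter, and the statement holds vacuously without separate argument.

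The step I expect to be the main obstacle is the structural lemma, since the whole reduction rests on $e$ and $C-e$ being the \emph{only} simple $u$-$v$ paths; I would be careful to invoke the cactus property exactly there, ruling out any "detour" path that leaves and re-enters $C$. The other delicate point is the reachability step, where I must reduce directed walks to simple directed paths so that the lemma can be applied to their underlying undirected paths and used simultaneously in both directions.
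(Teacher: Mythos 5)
Your proof is correct, but it takes a different route from the paper's. The paper argues by contradiction at a \emph{vertex}: if a cycle $C$ is not consistently oriented, some $v\in C$ has no out-neighbor inside $C$; either $v$ is a sink (infinite diameter immediately) or all its out-neighbors leave $C$, and since $v$ is the unique gateway between its attached blocks and the rest of $C$ in the cactus structure, no directed path can return from such an out-neighbor to the other vertices of $C$, again forcing infinite diameter. You instead argue at an \emph{edge}: the cactus property implies each edge $e=uv$ of $C$ lies on no other cycle, hence $e$ and $C-e$ are the only simple $u$--$v$ paths, and strong connectivity (equivalent to finite diameter) then forces $C-e$ to carry the direction opposite to $e$, so $C$ is a directed cycle. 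Your version has the advantage of being a direct, positive argument that isolates exactly where the cactus hypothesis enters (uniqueness of the cycle through an edge, hence uniqueness of the two $u$--$v$ paths), whereas the paper's version is shorter but leans implicitly on block-cut-tree reachability facts (``the cycles are connected acyclically'') that it does not spell out. Both are sound; your reduction of directed walks to simple directed paths before applying the path-uniqueness lemma is handled correctly, and your remark that the bridged case is vacuous is accurate though not needed.
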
 
\begin{claimproof}
    To see this, consider an orientation where a cycle $C$ does not have a cyclic orientation. Then, there is a vertex $v\in C$ with no out-neighbor in $C$. If $v$ has no out-neighbor outside of $C$ either, then the orientation has unbounded diameter. Thus, $v$ has some out-neighbor $u\notin C$. Then, let $w \in C$ be a neighbor of $v$. By assumption, the arc $(w,v)$ is oriented from $w$ to $v$. Now observe, that there can be no path from $u$ to $w$ as the cycles are connected acyclically, so the orientation would have an infinite diameter.
\end{claimproof}
 We can then give an arc reversal cost $f$ for each cycle $C$ where $f(C)$ is the number of arc reversals needed to change the orientation of $C$ in $G_0$ to be a clockwise orientation. The number of arc reversals needed for a counterclockwise orientation is then $|C|-f(C)$.
 Let $ C_1, \, C_2,\ldots, C_k $ be the simple cycles of $ G $, and $ T=(\{C_1, \, C_2,\ldots, C_k\}, F) $ 
 the underlying tree where an edge $ \{C_1, \, C_2\}\in F $ whenever $ C_1 $ and $ C_2 $ share a vertex. Fix an arbitrary root vertex $ r $ of $ T $. Given a vertex $ v $ in cycles $ C_1,\ldots, C_q $, let $ C_v $ denote the one furthest from $ r $. 
 In case this is not unique, let us use a slight abuse of notation
and replace $ v $ with a 0 weight cycle of length equal to the number of cycles sharing $v$ without changing the diameter.
	
	We wish to solve the following subproblem for each valid input:	
	Given a vertex $ v $, three positive integers $ d_{to} $, $ d_{from} $, and $k$, output the minimum diameter of an orientation of $ G $ such that the maximum distance from $ v $ to a node of some child of $ C_v $ is at most $ d_{to} $, the maximum distance to $ v $ from a node of some child of $ C_v $ is at most $ d_{from} $, and the orientation can be reached by at most $k$ arc reversals of the edges of $G_0$. We denote this minimum diameter by $ D(v, d_{to}, d_{from}, k) $.
	
	We now describe how to compute $ D(v, d_{to}, d_{from},k) $ from vertices in $ C_v $. Let $ C_v=(v,u_1,\ldots, u_{|C|}) $ be a clockwise traversal of the vertices of $ C_v $, and $v, \, u_{i_1},\ldots,u_{i_\ell}$ the vertices with degree greater than 2 in $G$. For ease of notation, we will refer to the path from $w_{1}$ to $w_{3}$ through $w_{2}$ as $(w_{1}, \, w_{2}, \, w_{3})$, even though it might contain more vertices. 
 We then have to choose between orienting the edges of $ C_v $ clockwise or counterclockwise. 
 Let $\text{dcw}_{to}(u)$ ($\text{dcw}_{from}(u)$) be the distance from $v$ to $u$ (to $v$ from $u$) when $C_v$ is oriented clockwise; let $\text{dccw}_{to}(u)$ ($\text{dccw}_{from}(u)$) be the distance from $v$ to $u$ (to $v$ from $u$) when $C_v$ is oriented counterclockwise. 
 If we orient clockwise, the \emph{to} distances of the children are the following:
	\begin{align*}
		\text{dcw}_{to}(u_{i_1}) &= d_{to}- |(v,u_1,\ldots,u_{i_1})|\\
		\text{dcw}_{to}(u_{i_2}) &= d_{to}- |(v,u_1,\ldots,u_{i_2})|\\
		\vdots\\
		\text{dcw}_{to}(u_{i_\ell}) &= d_{to}- |(v,u_1,\ldots,u_{i_\ell})|.
	\end{align*}
 See Figure \ref{fig:cactusDP} for an example of the reasoning behind these choices.
Similarly, for the maximum distances from a node of one of the children of $ C_v $:
		\begin{align*}
		\text{dcw}_{from}(u_{i_1}) &= d_{from}- |(u_{i_1},u_{i_1+1},\ldots, v)|\\
		\text{dcw}_{from}(u_{i_2}) &= d_{from}- |(u_{i_2},u_{i_2+1},\ldots, v)|\\
		\vdots\\
		\text{dcw}_{from}(u_{i_\ell}) &= d_{from}- |(u_{i_\ell},u_{i_\ell+1},\ldots, v)|.
	\end{align*}

	When orienting counterclockwise, the following parameters are needed:
 
	\begin{align*}
		\text{dccw}_{to}(u_{i_1}) &= d_{to}- |(v, u_{|C|},\ldots,u_{i_1})|\\
		\text{dccw}_{to}(u_{i_2}) &= d_{to}- |(v, u_{|C|},\ldots,u_{i_2})|\\
		\vdots\\
		\text{dccw}_{to}(u_{i_\ell}) &= d_{to}- |(v, u_{|C|},\ldots,u_{i_\ell})|\\
		\text{dccw}_{from}(u_{i_1}) &= d_{from}- |(u_{i_1},u_{i_1-1},\ldots, v)|\\
		\text{dccw}_{from}(u_{i_2}) &= d_{from}- |(u_{i_2},u_{i_2-1},\ldots, v)|\\
		\vdots\\
		\text{dccw}_{from}(u_{i_\ell}) &= d_{from}- |(u_{i_\ell},u_{i_\ell-1},\ldots, v)|.
	\end{align*}

	\begin{figure}
		\centering
		\begin{tikzpicture}
			\draw (0,0) circle[draw=black, radius=1];
			\draw (0.707, -0.707) circle[radius=2pt] node [below right, font=\small] {$ u_1 $};
			\draw (-0.707, -0.707) circle[radius=2pt] node [below left, font=\small] {$ u_2 $};
			\draw (0, 1) circle[radius=2pt] node [above, font=\small] {$ v $};
			\draw (1.414,-1.414) circle[draw=black, radius=1];
			\draw (-1.414,-1.414) circle[draw=black, radius=1];

                \draw[-{Stealth[length=3mm]},color=red,dotted, line width=1.5pt] (0,0.9) arc (90:-45:.9);
                \draw[-{Stealth[length=3mm]},color=red,dotted, line width=1.5pt] (0.7777,-0.7777) arc (135:315:.9);
                
                \draw[-{Stealth[length=3mm]},color=blue, dashed, line width=1.5pt] (0,0.9) arc (90:315:.9);
                \draw[-{Stealth[length=3mm]},color=blue, dashed, line width=1.5pt] (0.8484,-0.8484) arc (135:315:.8);
		\end{tikzpicture}
	\caption{A picture explaining the calculations for orienting $ C_v $. The dotted red path correspond to the calculation of $\text{dcw}_{to}(u_1)$ and the dashed blue correspond to the calculation of $\text{dccw}_{to}(u_1)$.}\label{fig:cactusDP}
	\end{figure}

	Knowing these values, we can calculate the diameter of an orientation of the subgraph rooted at $ v $ as the following maximum. For both orientations, we compute the maximum distance between any two nodes as
	\begin{align*}
		\text{dcw}_{btw} = \max \big\{ \max_{1\leq i<j\leq k}\{&\text{dcw}_{from}(u_i) + |(u_i,\ldots, u_j)| 
  + \text{dcw}_{to}(u_j)\}, \\
		\max_{1\leq i<j\leq k}\{&\text{dcw}_{from}(u_j) + |(u_j,u_{j+1},\ldots, v,\ldots, u_i)|
  + \text{dcw}_{from}(u_i)\},\\ 
		\max_{1\leq i\leq k}\{&D(u_i, \text{dcw}_{to}(u_i), \text{dcw}_{from}(u_i), k-f(C_v)\} \big\}\\
		\text{dccw}_{btw} = \max\big\{
		\max_{1\leq i<j\leq k}\{&\text{dccw}_{to}(u_j) + |(u_j,u_{j+1},\ldots, v,\ldots, u_i)|
  + \text{dccw}_{from}(u_i, )\},\\
		\max_{1\leq i<j\leq k}\{&\text{dcw}_{to}(u_i) + |(u_i,\ldots, u_j)|
  + \text{dccw}_{from}(u_j)\},\\
		\max_{1\leq i\leq k}\{&D(u_i, \text{dccw}_{to}(u_i), \text{dccw}_{from}(u_i), k-|C_v|+f(C_v)\} \big\}.
	\end{align*}

	Note that $ d_{to} $ and $ d_{from} $ are bounded by $ \text{dcw}_{btw}$ and $ \text{dccw}_{btw} $ as well, and that if both are finite, each value of $d_{to/from}^{u_i,\text{cw}/\text{ccw}}$ has an orientation achieving it as well, with the appropriate number of arc reversals.
	The orientation we choose is then
	
	\begin{align*}
		D(v, d_{to}, d_{from}, k) = \argmin\{\text{dcw}_{btw}, \text{dccw}_{btw}\}.
	\end{align*}

    Finally, we output the first case where $D(r, d, d, k)$ is less than the input $d_0$, as we increase $k$. If no such entry exists, then we conclude that the problem is infeasible. The actual orientation can then be found via a trace-back procedure.

	Correctness of this algorithm follows from the observation that we are looping through each possible consequence of each possible orientation. The running time can be bounded by the number of possible choices for $v, d_{to}, \, d_{from}, k$ times the time it takes to calculate each entry. 
    Let $n$ be the number of vertices and $m$ be the number edges.
    There are $ nm^3 $ choices of $ v, d_{to}, \, d_{from}, \, k $, as $k_0<m$. Each cycle has at most $n$ nodes,
    so we compute a constant times $n$ values for $ d_{to/from}^{u_i,\text{cw}/\text{ccw}} $, and take the maximum over $O(n^2)$ terms. Thus the algorithm has complexity $O(n^3m^3)$, which is polynomial in the input.

\end{proof}

\subsection{Proofs for Section~\ref{sec:algo_eggeman_noble}}
\label{section_deferred_FPT_k_fixed}

We show that the \textsc{$k$-Reversals} problem can be solved in polynomial time on planar graphs using the following result of Eggeman and Noble.

\begin{theorem}[Thm. 2.5 in \cite{eggemann2009minimizing}]\label{thm:fpt_on_diameter}
    There exists an FPT algorithm parameterized by $d$, the target diameter, that inputs a planar graph $G$ and determines whether it has an orientation with diameter at most $d$.
\end{theorem}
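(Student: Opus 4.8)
The strategy is to reduce the orientation problem to a bounded-treewidth instance and then solve it by dynamic programming over a tree decomposition. The key first observation is a cheap necessary condition: since a directed distance is never smaller than the corresponding undirected distance, any orientation of $G$ with (directed) diameter at most $d$ forces the underlying undirected graph to have diameter at most $d$ as well. Hence I would first run an all-pairs shortest path computation on $G$ and reject immediately if $\operatorname{diam}(G) > d$. In the remaining case $G$ is a planar graph of diameter at most $d$, so by the classical diameter--treewidth relationship for planar graphs (a planar graph of radius $r$ has treewidth $O(r)$, hence a planar graph of diameter $d$ has treewidth $O(d)$) we obtain $\tw(G) = O(d)$. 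Using \cref{thm_treewidth} I would then compute a nice tree decomposition of width $O(d)$ in time that is FPT in $d$.

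Next I would design a dynamic program on this decomposition. For each node $t$ with bag $B_t$, the table is indexed by a pair consisting of (i) an orientation of the edges of $G$ with both endpoints in $B_t$, and (ii) a \emph{capped directed distance matrix} $M \colon B_t \times B_t \to \{0,1,\dots,d,\top\}$, where $\top$ stands for ``larger than $d$''. A pair $(\sigma, M)$ is declared feasible at $t$ if the subgraph $G_t$ induced by all vertices introduced at or below $t$ admits an orientation extending $\sigma$ such that every ordered pair of already-forgotten vertices has directed distance at most $d$ in $G_t$, and for every $u, v \in B_t$ the shortest directed $u$-to-$v$ distance in $G_t$ equals $M(u,v)$ (values exceeding $d$ recorded as $\top$). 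Because every witness of diameter at most $d$ uses a directed path of length at most $d$, capping at $d$ discards no relevant information. The introduce, forget and join operations are then routine: introducing a vertex adds its incident in-bag arcs and reclamps the matrix; a join node merges the two children's matrices together with the bag's own arcs via a min-plus (Floyd--Warshall) closure over $B_t$; a forget node marginalizes the departing vertex after verifying that its directed distances to and from all other bag vertices are at most $d$. The instance is a yes-instance iff some feasible entry at the root has all matrix values at most $d$.

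For the running-time bound, the number of in-bag orientations is $2^{O(|B_t|)} = 2^{O(d)}$ and the number of capped distance matrices is $(d+2)^{|B_t|^2} = (d+2)^{O(d^2)}$, so each table has size bounded by a function of $d$ alone. Each introduce or forget step touches one table in time $f(d)$, and each join step compares two tables and performs an $O(|B_t|^3)$ closure, again in time $f(d)$. Since a nice tree decomposition has $O(n)$ nodes, the total running time is $f(d)\cdot\operatorname{poly}(n)$, which is exactly FPT in the target diameter $d$.

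The main obstacle is the correctness of the distance bookkeeping across join and forget nodes: one must show that a shortest directed path between two bag vertices, which may repeatedly leave the current bag and return through already-forgotten vertices, is faithfully summarized by the capped matrices of the children together with the bag's own arcs. This is precisely the content of the min-plus closure step, and the argument relies on the separator property of tree decompositions --- any such path crosses $B_t$ whenever it switches between the parts handled by different children --- combined with the fact that, once a vertex is forgotten, every shortest path through it that is relevant to the diameter bound already has length at most $d$ and has been locked into the matrix. Verifying that no relevant shortest path is lost or double-counted, and that the forget step correctly discharges all distance constraints involving the departing vertex, is the technical heart of the proof; everything else is standard tree-decomposition dynamic programming.
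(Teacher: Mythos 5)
Your overall route is the same one the paper attributes to Eggemann and Noble: observe that an orientation of diameter at most $d$ forces the undirected diameter to be at most $d$, invoke the planar diameter--treewidth bound to get $\tw(G)=O(d)$, compute a decomposition via \cref{thm_treewidth}, and run a dynamic program over it with distance ``characteristics'' per bag. However, your DP state is genuinely too small, and the feasibility invariant you define cannot be maintained. You require that every ordered pair of already-forgotten vertices has directed distance at most $d$ \emph{within $G_t$}, and your forget step discharges a vertex only after verifying its directed distances to and from all bag vertices are at most $d$ in $G_t$. Since $B_t$ is a separator, a shortest directed path between two vertices of $G_t$ may leave $G_t$ through the bag and return through vertices not yet introduced; a directed cycle already exhibits this, where the distance from $u$ to $w$ inside the processed arc is $\top$ while the route around the other side, through unprocessed vertices, is short. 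So your condition rejects yes-instances. It is simultaneously too weak in the other direction: pairs consisting of a forgotten vertex and a vertex introduced \emph{later} are never checked anywhere in your scheme --- the bag-only matrix does not carry the forgotten vertex's distance vectors, and a forgotten vertex being within distance $d$ of every bag vertex does not bound its distance to future vertices. Hence the algorithm as specified can also accept no-instances.

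The repair is precisely what the cited paper's ``characteristics'' provide, and what the present paper's sketch alludes to: in addition to the in-bag orientation and the capped bag-to-bag distances, the state must record, for each forgotten vertex, its capped distance vector to and from the bag vertices (together with which forgotten--forgotten pairs still owe each other a connection of length at most $d$, encodable via pairs of such vectors). Since there are only $(d+2)^{O(d)}$ distinct capped vectors, one stores the \emph{set} of realized characteristics rather than one entry per vertex, so the table size remains a function of $d$ alone; a distance constraint involving a forgotten vertex is discharged only once a completed path of length at most $d$ is certified, or finally rejected when no future completion through the bag can help. With the state enlarged this way, the rest of your outline --- introduce and join via a min-plus closure over the bag, forget by merging a vertex's vector into the characteristic set, $O(n)$ nodes, total time $f(d)\cdot\operatorname{poly}(n)$ --- goes through and matches the argument the paper defers to \cite{eggemann2009minimizing}.
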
 

In their paper, Eggemann and Noble used the fact that planar graphs with bounded diameter also have bounded treewidth, and then they gave an algorithm that is FPT in the treewidth of $G$. The same result holds for planar graphs of bounded \emph{oriented} diameter, as the diameter of an undirected graph is bounded by the diameter of any of its orientations. 

We note that this result naturally extends to the \textsc{$k$-Reversals} problem. The algorithm is very similar in nature to the one given in \cref{section: cactus} so we can factor $k$ in similarly. 
 For this reason, we only give a sketch of the ideas.

In the \textsc{$k$-Reversals} problem, we are given an orientation of a planar graph. We can use the underlying undirected graph, and only use the orientation when factoring in the arc reversal distance.
Let us consider a tree decomposition of this undirected planar graph with minimal treewidth, which we can construct using \cref{thm_treewidth}.
For a given bag in the tree decomposition, we keep track of all the \emph{characteristics} of each vertex in the bag, where a characteristic is a summary of the distances from a given vertex to other vertices in the bag, as well as a list of vertices that are at distance greater than $d$. We can simply add another dimension for the arc reversal distance from the given initial orientation, and increment this appropriately to increase the complexity by a factor of $k$. Thus with only a slight modification, the algorithm of \cite{eggemann2009minimizing} solves the \textsc{$k$-Reversals} problem in time FPT in the diameter on planar graphs as well.

\subsection{Proofs for Section~\ref{sec:polytopes}}
\thmEdgePolytope*

We need the following theorem.
\begin{theorem}
    For every integer $i\geq 8$ there exists an integer $j<i$ and a pair of directed graphs $G, \, H$ with diameter $i$ and $j$ respectively such that their respective directed edge polytopes have the same volume.
\end{theorem}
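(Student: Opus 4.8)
The plan is to exploit the observation, hinted at just before the statement, that for a cactus the directed edge polytope splits as a \emph{free sum} over its cycles, so that its volume is insensitive to how the cycles are glued together: only the multiset of cycle lengths matters. I would then produce, for each target $i$, two cacti built from the \emph{same} multiset of cycles but arranged so as to realize two different diameters.

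First I would establish the structural lemma. Let $D$ be any orientation of a cactus in which every cycle is oriented as a directed cycle (by the Claim inside the proof of \cref{thm:cactus}, this is forced whenever the diameter is finite). For a cycle $C$ let $W_C\subseteq\mathbb{R}^{V}$ be the space of vectors supported on $V(C)$ whose coordinates sum to zero; then $\dim W_C=|V(C)|-1$, and every edge vector $e_v-e_w$ coming from an arc of $C$ lies in $W_C$. Two distinct cycles of a cactus meet in at most one vertex, so any vector in $W_C\cap W_{C'}$ is supported on a single vertex and sums to zero, hence is $0$; thus the spaces $W_C$ have pairwise trivial intersection. Since the underlying graph is connected, the edge vectors span the whole hyperplane $\{\sum_v x_v=0\}$, of dimension $|V|-1=\sum_C(|V(C)|-1)$, so the sum $\sum_C W_C$ is in fact direct and fills this hyperplane. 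Because the edge vectors of a directed cycle sum to zero, the origin lies in each cycle polytope $P_C=\operatorname{conv}\{e_v-e_w:(w,v)\in A(C)\}$, and therefore the directed edge polytope $P_D=\operatorname{conv}\bigl(\bigcup_C P_C\bigr)$ is exactly the iterated free sum of the $P_C$ in the complementary subspaces $W_C$. The volume of a free sum is a universal function of the volumes and dimensions of its summands, so $\operatorname{Vol}(P_D)$ depends only on the multiset of cycle lengths (and not on the gluing, nor on the clockwise/counterclockwise choices, since reversing a cycle merely negates its polytope).

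With this in hand I would exhibit two cactus graphs built from the same $t$ triangles. For $H$ take the \emph{flower}: $t$ directed triangles sharing a single common vertex $c$. Every inter-triangle path must pass through $c$, so $\operatorname{diam}(H)=4$, realized by a vertex at distance $2$ from $c$ together with a vertex at distance $2$ to $c$ in a different triangle. For $G$ take the \emph{path} of triangles $T_1,\dots,T_t$ with connector vertices $c_0,\dots,c_t$, where $T_k=\{c_{k-1},m_k,c_k\}$ and consecutive triangles share $c_k$. Orienting $T_k$ either as $c_{k-1}\to m_k\to c_k\to c_{k-1}$ (forward cost $2$) or as $c_{k-1}\to c_k\to m_k\to c_{k-1}$ (forward cost $1$), and letting $f$ be the number of triangles of the second type, the connectors are cut vertices, so there is no shortcut and $d(c_0,c_t)=2t-f$; a short check shows this is the diameter whenever $f\le t/2$, giving $\operatorname{diam}(G)=2t-f$, any prescribed value in the integer interval $[\lceil 3t/2\rceil,2t]$. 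Both $G$ and $H$ consist of exactly $t$ directed triangles on $2t+1$ vertices, so by the lemma their directed edge polytopes have equal volume.

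Finally, to meet the quantifier, given $i\ge 8$ I would choose an integer $t$ with $i/2\le t\le 2i/3$ (such $t$ exists because this interval has length $i/6\ge 1$) and set $f=2t-i\in[0,t/2]$; then $\operatorname{diam}(G)=i$, while setting $j=\operatorname{diam}(H)=4$ gives $j<i$, completing the proof. I expect the main obstacle to be the structural lemma: verifying that the cycle subspaces genuinely form a direct sum and that the directed edge polytope is honestly the free sum (so that the volume is gluing-independent) is where the argument must be airtight. The diameter computations for the flower and the path are elementary once one notes that cut vertices forbid shortcuts, although one must still confirm that the backward direction and the paths routed through the private vertices $m_k$ never exceed $2t-f$.
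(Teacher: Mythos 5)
Your proposal is correct in outline but takes a genuinely different route from the paper's, and one quantitative claim in it fails as stated. The paper relies on the same free-sum decomposition that you prove as your structural lemma, but only in its weakest form: it fixes a single chain of $3$-cycles (capped by a $4$-cycle when $i$ is odd) and reverses the orientation of one interior cycle; this is a unimodular map on one summand of a \emph{fixed} decomposition, hence volume-preserving, and it drops the diameter from $i$ to $i-1$. You instead invoke the stronger gluing-independence of the volume and compare two \emph{different} cacti built from the same $t$ triangles (a path versus a flower), which buys a much larger gap, $i$ versus $j=4$. Your verification of the direct-sum decomposition (pairwise trivial intersections plus the dimension count $|V|-1=\sum_C(|V(C)|-1)$) is right, but note that gluing-independence of the volume of a free sum needs the lattice-normalized volume together with the fact that the root lattice $\{x\in\mathbb{Z}^V:\sum_v x_v=0\}$ splits as the direct sum of the cycles' root lattices (true for cacti, by peeling off leaf cycles); with plain Euclidean volume the free sum does depend on how the complementary subspaces sit. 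The paper is equally terse on this point, so this is a caveat rather than an objection.

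The genuine slip is the claim that $\operatorname{diam}(G)=2t-f$ whenever $f\le t/2$. If $T_1$ and $T_t$ are both of the forward-cost-$1$ type, then $d(m_1,c_1)=2$ and $d(c_{t-1},m_t)=2$, so $d(m_1,m_t)=d(c_0,c_t)+2=2t-f+2$. Moreover, the backward distance between private vertices can reach $t+f+2$ (take $T_1,T_t$ of forward-cost-$2$ type and go $m_t\to c_t\to c_{t-1}\to\cdots\to c_1\to c_0\to m_1$), which exceeds $2t-f$ once $f>(t-2)/2$; your window allows $f$ as large as $t/2$, attained at $t=2i/3$, so the parameter choice does not always work. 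Both issues are repairable and do not threaten the theorem: take $t=\lceil i/2\rceil\ge 4$, so that $f=2t-i\in\{0,1\}$, and place the (at most one) forward-cost-$1$ triangle away from the two ends. Then every forward path through a private vertex gains at most $+1$ at one end and loses at least $1$ at the other, giving forward diameter exactly $2t-f=i$, while the backward distances are at most $t+f+2<2t-f$; hence $\operatorname{diam}(G)=i$ and $\operatorname{diam}(H)=4<i$, completing the argument.
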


\begin{proof}
    We start by noticing that directed cycles correspond to standard reflexive simplices and that reversing the direction corresponds to a unimodular (and hence in particular volume preserving) transformation of the associated simplex.
    Further, we notice that joining two graphs at a vertex corresponds to taking the free sum of the corresponding simplices, meaning that the volumes of their corresponding directed edge polytopes are multiplied.
    Hence, the volume of the directed edge polytope of a directed cactus graph does not depend on the direction of the individual cycles.

    Now we can construct $G$ and $H$.
    First assume $i$ was even.
    Write $i=2k$.
    For $G$, consider a directed path of length $k$.
    To every edge in that path, attach a path of length $2$ parallel to its ends and direct it the other way so that it now lies in a directed $3$-cycle.
    This results in a directed cactus graph of $3$-cycles with diameter $2k$.
    Reversing exactly one of the cycles that is attached to two other cycles results in the graph $H$ which has diameter $(2k-1)$.

    If $i$ is odd, we can use an analogous construction.
    Write $i=2k+1$.
    For $G$, we start with a directed path of length $k$ again and complete every edge to a $3$-cycle, except the last one which shall be completed to a $4$-cycle instead.
    The construction of $H$ is analogous.
\end{proof}

This proves that the diameter of the graph does not depend on the volume of the polytope.
In fact, $K_{3,3}$ has orientations of diameter $3$, $4$, and $5$, whose associated volumes are $\frac{15}{60}$, $\frac{10}{60}$, and $\frac{13}{60}$ respectively.

\end{appendices}
\end{document}